\documentclass[12pt,reqno]{amsart}
\usepackage[a4paper,margin=2.5cm,top=2.5cm,bottom=2.5cm,centering,headheight=3ex,headsep=4ex,vcentering]{geometry}
\usepackage{amssymb,amsfonts,amsmath,amsthm}
\usepackage{orcidlink}
\usepackage{cite}
\usepackage[cal=cm,scr=euler]{mathalfa}
\usepackage{microtype}
\usepackage{mlmodern}
\numberwithin{equation}{section}
\theoremstyle{plain}
\newtheorem{theorem}{Theorem}
\newtheorem{conjecture}{Conjecture}
\newtheorem{lemma}{Lemma}

\newtheorem{corollary}{Corollary}
\theoremstyle{definition}

\theoremstyle{remark}
\newtheorem{remark}{Remark}

\usepackage{xcolor}
\usepackage{hyperref}
\definecolor{ao(english)}{rgb}{0.0, 0.5, 0.0}
\definecolor{pink}{rgb}{1.0, 0.0, 1.0}
\hypersetup{colorlinks=true, linkcolor=ao(english),citecolor=ao(english)}

\title[$q$-Supercongruences Modulo the Third and Fifth Powers of a Cyclotomic Polynomial]{$q$-Supercongruences Modulo the Third and Fifth Powers of a Cyclotomic Polynomial}

\author[Liton Karmakar]{Liton Karmakar\,\orcidlink{0009-0002-9532-1677}}
\address[Liton Karmakar]{Department of Mathematics, National Institute of Technology Silchar, Assam 788010, India}
\email{litonofficial8638@gmail.com}

\author[Arijit Jana]{Arijit Jana\,\orcidlink{0000-0002-7440-3297}}
\address[Arijit Jana]{Department of Mathematics, National Institute of Technology Silchar, Assam 788010, India}
\email{jana94arijit@gmail.com}

\keywords{$q$-supercongruences; Watson\rq s ${}_{8}\phi_7$ transformation; Chinese remainder theorem.}
\subjclass[2020]{33D15·11B65·05A10}

\begin{document}

\begin{abstract}
In this paper, we establish several $q$-supercongruences for basic hypergeometric series modulo the third and fifth powers of a cyclotomic polynomial. These results extend several existing supercongruences related to the $\mathrm{(A.2)}$ and $\mathrm{(C.2)}$ supercongruences of Van Hamme. Our proofs employ the creative microscoping method developed by Guo and Zudilin \cite{guo2019q}, together with Watson's ${}_{8}\phi_7$ transformation formula.
\end{abstract}

\maketitle

\section{Introduction and statement of results}

In his celebrated letter to Hardy, Ramanujan \cite{ramanujan} presented several remarkable infinite series for $1/\pi$. Among them are the following identities:
\begin{align} 
    & (\mathrm{A}.1)\qquad
      \sum_{n=0}^{\infty} (-1)^n(4n+1)
       \frac{(\frac{1}{2})_n^5}{n!^5}
      = \frac{2 }{\Gamma(\frac{3}{4})^4}, \label{A1} \\ 
    & (\mathrm{C}.1)\qquad\sum_{n=0}^{\infty} (4n+1) \frac{(\frac{1}{2})_n^4}{n!^4} =\infty, \label{C1} 
\end{align}
where $(z)_n= z (z+1) \cdots(z+n-1)$ denotes the \textit{shifted factorial} and $\Gamma(x)$ denotes the Gamma function. In $1997$, Van Hamme \cite{van2020some} formulated a collection of remarkable conjectural supercongruences that can be viewed as $p$-adic analogues of Ramanujan-type formulas for $1/\pi$. For example, the identities \eqref{A1} and \eqref{C1} give rise to the following notable supercongruences:
\begin{align}
   & (\mathrm{A}.2)\qquad   \displaystyle \sum_{n=0}^{(p-1)/2} (-1)^n (4n+1) \frac{(\frac{1}{2})_n^5}{n!^5} \equiv
\begin{cases}
-\dfrac{p}{\Gamma_p\left(\frac{3}{4}\right)^4}, & p \equiv 1 \pmod{4} \\
0, & p \equiv 3 \pmod{4} ~~~~
\end{cases}
\ (\bmod\ p^3), \label{A2} \\
   & (\mathrm{C}.2)\qquad \sum_{n=0}^{(p-1)/2} (4n+1) \frac{(\frac{1}{2})_n^4}{n!^4} \equiv p \pmod{p^3}, \label{C2} 
\end{align}
where $p$ is an odd prime. In $2008$, McCarthy and Osburn \cite{McCarthyOsburn} provided a proof for the supercongruence \eqref{A2}. Subsequently, Swisher \cite{swisher} extended this result to the modulus $p^5$ for $p \equiv 1 \pmod{4}$ and $p>5.$ On the other hand, Van Hamme himself proved the supercongruence \eqref{C2} by analyzing a sequence of orthogonal polynomials. In $2011$, Long \cite{long1} independently strengthened this result by extending \eqref{C2} to the modulus $p^4$ for primes $p\ge 5$, using certain hypergeometric identities. In particular, Long established the following two supercongruences:
\begin{align}
    & \sum_{n=0}^{(p-1)/2} (4n+1) \frac{(\frac{1}{2})_n^4}{n!^4} \equiv p \pmod{p^4}, \label{LongC2 1} \\
    & \sum_{n=0}^{(p-1)/2} (4n+1) \frac{(\frac{1}{2})_n^6}{n!^6} \equiv p \sum_{n=0}^{(p-1)/2} \frac{(\frac{1}{2})_n^4}{n!^4} \pmod{p^4}. \label{LongC2 2}
\end{align}
Employing the $q$-WZ method, Guo and Wang \cite{Guo2020 Edinburgh} established a $q$-analogue of \eqref{LongC2 1}. More precisely, for any odd integer $m>1$, they proved that

\begin{align} \label{GuoWang qC2 1}
    \sum_{n=0}^{(m-1)/2}
[4n+1]\frac{(q;q^{2})_{n}^{4}}{(q^{2};q^{2})_{n}^{4}}
\equiv q^{(1-m)/2}[m]
+\frac{(m^{2}-1)(1-q)^{2}}{24}\,
q^{(1-m)/2}[m]^{3}
\pmod{[m]\Phi_{m}(q)^{3}}.
\end{align}
By reducing \eqref{GuoWang qC2 1} modulo $[m]\Phi_m(q)^2$, they obtained the following $q$-analogue of the supercongruence \eqref{C2}:
\begin{align} \label{GuoWang qC2 2}
    \sum_{n=0}^{(m-1)/2}
[4n+1]\frac{(q;q^{2})_{n}^{4}}{(q^{2};q^{2})_{n}^{4}}
\equiv q^{(1-m)/2}[m]
\pmod{[m]\Phi_m(q)^{2}}.
\end{align}
Here, as usual, $[m]=(1-q^m)/(1-q)$ denotes the $q$-integer and $\Phi_m(q)$ denotes the $m$-th cyclotomic polynomial. 
%In this context, Guo and Zudilin \cite[Theorem $4.2$ with $a=1$]{guo2019q} established a parametric generalization of \eqref{GuoWang qC2 2} involving an additional parameter $c$. More precisely, for any odd integer $m>1$, they proved that
%\begin{align} \label{GuoZudilin qC2}
%&\sum_{n=0}^{(m-1)/2}[4n+1] \frac{(q;q^2)_{n}^3 (q/c;q^2)_n}{(q^2;q^2)_{n}^3 (cq^2;q^2)_n} c^n \equiv [m] \frac{(c/q)^{\frac{m-1}{2}} (q^2/c;q^2)_{\frac{m-1}{2}}}{(cq^2;q^2)_{\frac{m-1}{2}}}   \pmod{[m]\Phi_m(q)^2}.
%\end{align}
Quite recently, Tang \cite{Tang2023 JMAA} established a new parametric $q$-extension of \eqref{C2}. Specifically, for any odd integer $m>1$ and $0\leq s\leq \frac{m-1}{2}$, Tang proved that
\begin{align} \label{Tang's Extn qC2}
&\sum_{n=s}^{(m-1)/2}[4n+1] \frac{(q;q^2)_{n-s}(q;q^2)_{n+s} (q;q^2)_n^2}{(q^2;q^2)_{n-s}(q^2;q^2)_{n+s} (q^2;q^2)_n^2 } \equiv [m]q^{\frac{1-m}{2}} \pmod{\Phi_m(q)^3}.
\end{align}
Among Van Hamme\rq s remarkable conjectural supercongruences, Van Hamme also proved the following supercongruence:
\begin{align}
    (\mathrm{H}.2)\qquad   \displaystyle \sum_{n=0}^{(p-1)/2} \frac{(\frac{1}{2})_n^3}{n!^3} \equiv
\begin{cases}
-\Gamma_p\left(\frac{1}{4}\right)^4, & p \equiv 1 \pmod{4} \\
0, & p \equiv 3 \pmod{4} ~~~~
\end{cases}
\ (\bmod\ p^2). \label{H2}
\end{align}
Guo and Schlosser \cite{GuoSclssr ConstApprox 21} obtained a $q$-analogue of \eqref{LongC2 2} as follows: for a positive odd integer $m$,
\begin{align} \label{GuoSchlsr q-long}
    \sum_{n=0}^{(m-1)/2}
[4n+1]
\frac{(q;q^2)_n^6}{(q^2;q^2)_n^6}q^n
\equiv
[m]q^{(1-m)/2}
\sum_{n=0}^{(m-1)/2}
\frac{(q;q^2)_n^4}{(q^2;q^2)_n^4}q^{2n}
\pmod{[m]\Phi_m(q)^2}.
\end{align}
From \eqref{A2} and \eqref{H2}, Guo and Schlosser \cite{GuoSclssr ConstApprox 21} deduce that
\begin{align} \label{GuoSchlsr remakr}
    \sum_{n=0}^{(p-1)/2} (-1)^n (4n+1) \frac{(\frac{1}{2})_n^5}{n!^5} \equiv p \sum_{n=0}^{(p-1)/2} \frac{(\frac{1}{2})_n^3}{n!^3} \pmod{p^3},
\end{align}
which was previously observed by Morteson \cite{mortenson}. In the same paper, they also established the following $q$-analogue of \eqref{GuoSchlsr remakr}:
\begin{align} \label{GuoSchlsr q-power 5}
    &\sum_{n=0}^{(m-1)/2}
(-1)^n[4n+1]
\frac{(q;q^2)_n^5}{(q^2;q^2)_n^5}
q^{n^2+n} \equiv
[m]q^{(1-m)/2}
\sum_{n=0}^{(m-1)/2}
\frac{(q;q^2)_n^3}{(q^2;q^2)_n^3}\,
q^{2n}
\pmod{[m]\Phi_m(q)^2}.
\end{align}
%Quite recently, Tang \cite{Tang2023 JMAA} gave a new extension of \eqref{C2} as follows: for $0 \le s \le (p-1)/2$,
%\begin{align} \label{Tang's Extn C2}
%    \sum_{n=s}^{\frac{p-1}{2}} (4n+1) \frac{(\frac{1}{2})_{n-s} (\frac{1}{2})_{n+s} (\frac{1}{2})_{n}^2  }{(n-s)! (n+s)! n!^2} \equiv p \pmod{p^3}.
%\end{align}
%With the help of Jackson\rq s ${}_{6}\phi_5$ summation together with the \lq \lq creative microscopic method\rq \rq, introduced in \cite{guo2019q}, Tang proved \eqref{Tang's Extn C2} by establishing the following $q$-supercongruence: for an odd integer $m>1,$ and $0\le s \le \frac{m-1}{2},$
For completeness, we recall some standard notation from the theory of basic hypergeometric series. Let $z$ and $q$ be complex numbers with $|q|<1$. The \emph{$q$-shifted factorial} is defined by  $$
(z;q)_0:=1,\qquad
(z;q)_n:=\prod_{k=0}^{n-1} (1-zq^k),\qquad n\geq 1.
$$
For convenience, we use the compact notation:
$$(z_1,\ldots,z_r;q)_n
:=(z_1;q)_n\cdots(z_r;q)_n.$$
Recently, Wei \cite{Wei JCTA 21} provided some refinements of the supercongruences $\eqref{LongC2 1} - \eqref{GuoWang qC2 2}$ and $\eqref{GuoSchlsr q-long}-\eqref{GuoSchlsr q-power 5}$.
\par Motivated by these, we aim to establish a parametric $q$-supercongruence that involves the parameters $c,d$, and $s$, as stated in the following theorem.
\begin{theorem} \label{P7 Thm 1}
    Let $m>1$ be an odd integer and $s$ be a non-negative integer with $s\le \frac{m-1}{2}$. Then, modulo $\Phi_m(q)^3,$
\begin{align} \label{P7 Thm 1 eqn}
&\sum_{n=s}^{\frac{m-1}{2}}[4n+1] \frac{(q;q^2)_{n-s}(q;q^2)_{n+s} (q;q^2)_n^2(cq,dq;q^2)_n}{(q^2;q^2)_{n-s}(q^2;q^2)_{n+s} (q^2;q^2)_n^2 (q^2/c, q^2/d; q^2)_n}(cd)^{-n}q^n \notag \\
&\quad \equiv
[m+2s]\frac{(q;q^2)_s^2(cq,dq;q^2)_s (q^2;q^2)_{\frac{m-1}{2}-s} (q;q^2)_{\frac{m-1}{2}+s}}{(q^2;q^2)_s^2(q^2/c,q^2/d;q^2)_s (q;q^2)_{\frac{m-1}{2}-s} (q^2;q^2)_{\frac{m-1}{2}+s}} (cd)^{-s}q^{\frac{1-m}{2}+2s} \notag \\
&\qquad \times\sum_{n=0}^{\frac{m-1}{2}-s} \frac{(q,q/cd;q^2)_n (q^{2s+1};q^2)_n^2}{(q^2;q^2)_n^2(q^{2s+2}/c, q^{2s+2}/d;q^2)_n} q^{2n}.
\end{align}
\end{theorem}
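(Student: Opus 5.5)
We plan to prove \eqref{P7 Thm 1 eqn} with Watson's ${}_8\phi_7$ transformation, reading it as an exact identity in which one parameter is specialised at a $q$-power, and then with the creative microscoping method of Guo and Zudilin \cite{guo2019q}.

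\emph{Step 1: rewrite the left-hand side as a very-well-poised series.} Shift the summation index, $n\mapsto n+s$. The summand then becomes a very-well-poised ${}_8\phi_7$ in base $q^2$ with
\[
a=q^{4s+1},
\]
including the factor $[4n+4s+1]$, which supplies $(q\sqrt a,-q\sqrt a;q^2)_n/(\sqrt a,-\sqrt a;q^2)_n$. The numerator parameters are $q^{2s+1}$ (twice, from $(q;q^2)_{n+s}^2$), $cq^{2s+1}$ and $dq^{2s+1}$. The remaining pair comes from $(q;q^2)_{n-s}/(q^2;q^2)_{n-s}$ together with one extra factor. To make the series terminate we introduce a microscoping parameter $a$ (renamed to avoid a clash) and replace $(q;q^2)_{n-s}(q;q^2)_{n+s}$ by
\[
(aq^{1-m},q^{1+m}/a;q^2)_{n}\text{-type factors}
\]
shifted by $s$. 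At $a=1$ these reduce modulo $\Phi_m(q)^2$ to the original products. The parametric series then terminates naturally, its length being governed by $q^{-(m-1)+2s}$ or its $a$-deformation.

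\emph{Step 2: apply Watson's transformation exactly.} Watson's formula turns the terminating ${}_8\phi_7$ into a balanced ${}_4\phi_3$ times a $q$-Pochhammer quotient. Choose the two parameters sent into the ${}_4\phi_3$ prefactor to be $cq^{2s+1}$ and $dq^{2s+1}$. This produces the factor $(q/cd;q^2)_n$ in the sum, the denominators $(q^{2s+2}/c,q^{2s+2}/d;q^2)_n$, and the product
\[
\frac{(q^2/c,q^2/d;q^2)}{(\cdots)}\quad\text{over the range } \tfrac{m-1}{2}-s .
\]
With $a=1$ the ${}_4\phi_3$ collapses to $\sum_n \frac{(q,q/cd;q^2)_n (q^{2s+1};q^2)_n^2}{(q^2;q^2)_n^2(q^{2s+2}/c,q^{2s+2}/d;q^2)_n}q^{2n}$, matching the target. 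The prefactor, after simplifying with $q^{m}\to$ a root of unity, gives $[m+2s]$, the ratios $(q^2;q^2)_{\frac{m-1}{2}-s}(q;q^2)_{\frac{m-1}{2}+s}/((q;q^2)_{\frac{m-1}{2}-s}(q^2;q^2)_{\frac{m-1}{2}+s})$, and the extra $s$-dependent factor from the index shift.

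\emph{Step 3: get the congruences and glue them.} With the $a$-deformed parameters, the identity from Step 2 holds exactly when $a=q^{\pm m}$. So the deformed congruence holds modulo $(1-aq^m)(a-q^m)$. Separately, modulo $\Phi_m(q)$ both sides vanish, or agree by the usual symmetry $n\leftrightarrow \frac{m-1}{2}+s-n$-type pairing. We then use the Chinese remainder theorem for the coprime polynomials $\Phi_m(q)$, $1-aq^m$ and $a-q^m$ to get a congruence modulo $\Phi_m(q)(1-aq^m)(a-q^m)$. Finally, set $a\to1$; since $(1-q^m)^2$ contains $\Phi_m(q)^2$, this yields the result modulo $\Phi_m(q)^3$.

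\emph{Main obstacle.} The hardest part will be the $\Phi_m(q)$ step, together with checking that denominators such as $(q^{2s+2}/c;q^2)_n$ and the $a$-deformed factors are coprime to $\Phi_m(q)$ and to $1-aq^m$. The delicate point is the ranges where $(q^2;q^2)_{n\pm s}$ in the denominator hits $\Phi_m$, for $n+s>\frac{m-1}{2}$. First, we will try to show that the left side, truncated at $\frac{m-1}{2}$, is antisymmetric under $n\mapsto \frac{m-1}{2}-n$ modulo $\Phi_m(q)$, as in Tang \cite{Tang2023 JMAA}. Failing that, we will take the limit $a\to1$ directly in the exact identity evaluated at $a=q^{-m}$ and differentiate.
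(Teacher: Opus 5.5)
\textbf{There is a genuine gap in Step 1.} Your overall architecture matches the paper: shift $n\mapsto n+s$, read the sum as a very-well-poised ${}_8\phi_7$ in base $q^2$ with well-poised parameter $q^{4s+1}$, apply Watson exactly at $a=q^{\pm m}$, show vanishing modulo $\Phi_m(q)$, glue by the Chinese remainder theorem, and set $a=1$. However, the microscoping parameter is put on the wrong factors and with the wrong normalisation, and as written the argument does not close.

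After the shift, Tang's factors $(q;q^2)_{n-s}(q;q^2)_{n+s}$ become $(q;q^2)_n$ and $(q;q^2)_{n+2s}=(q;q^2)_{2s}(q^{4s+1};q^2)_n$. The first is the numerator parameter $q$. The second is the well-poised parameter $q^{4s+1}$ itself, which is locked to $[4n+4s+1]$ and to the base factor $(q^2;q^2)_n$.
\begin{itemize}
\item You cannot deform $q^{4s+1}$ without destroying very-well-poisedness.
\item Deforming $q$ into $q^{1-m}$ is a single factor, not a symmetric pair. It also makes the series stop at $n=\frac{m-1}{2}$ in the shifted index instead of $\frac{m-1}{2}-s$, so you get no exact identity for the truncated sum.
\end{itemize}
The deformation that works is on the pair displayed in the theorem as $(q;q^2)_n^2/(q^2;q^2)_n^2$:
\[
\frac{(q;q^2)_n^2}{(q^2;q^2)_n^2}\longmapsto\frac{(aq,q/a;q^2)_n}{(aq^2,q^2/a;q^2)_n}.
\]
After the shift this gives the numerators $aq^{2s+1}$ and $q^{2s+1}/a$, with partners $q^{2s+2}/a$ and $aq^{2s+2}$. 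At $a=q^{\pm m}$ one of them equals $q^{2s+1-m}=q^{-2N}$ with $N=\frac{m-1}{2}-s$, which is exactly the truncation point.

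The deformed sum must also coincide exactly with the original at $a=1$. Otherwise, specialising a congruence modulo $\Phi_m(q)(1-aq^m)(a-q^m)$ gives only what the replacement itself gives.
\begin{itemize}
\item Your $(aq^{1-m},q^{1+m}/a)$ agrees with the original at $a=1$ at best modulo $\Phi_m(q)^2$. So you would end with modulus $\Phi_m(q)^2$, not $\Phi_m(q)^3$.
\item At $a=q^{\pm m}$ your parametrisation produces $(q,q)$ or $(q^{1-2m},q^{1+2m})$. These are odd powers, which never terminate in base $q^2$, so the exact evaluation you invoke in Step 3 is not available.
\end{itemize}

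With the correct deformation, take in Watson's formula $b=cq^{2s+1}$, $c=dq^{2s+1}$, $d=q$, $e=q^{2s+1+m}$ and $q^{-N}\mapsto q^{2s+1-m}$.
\begin{itemize}
\item The parameters $b,c$ (not the prefactor ones) produce $q/cd$ and the denominators $q^{2s+2}/c$, $q^{2s+2}/d$. The argument of the series is then $q/cd$.
\item The prefactor $(q^{4s+3},q^{2s+1-m};q^2)_N/(q^{4s+2},q^{2s+2-m};q^2)_N$ simplifies exactly, with no root-of-unity reduction, to the $[m+2s]$-quotient times $q^{\frac{1-m}{2}+s}$.
\item $c$ and $d$ enter only through $(cq,dq;q^2)_s/(q^2/c,q^2/d;q^2)_s$ from the shift, not over the range $\frac{m-1}{2}-s$.
\end{itemize}

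Your coprimality worry is unfounded. Since $n+s\le m-1$ and $m$ is odd, $(q^2;q^2)_{n\pm s}$ is prime to $\Phi_m(q)$. The $\Phi_m(q)$ step consists of three pieces:
\begin{itemize}
\item the pairing $n\leftrightarrow\frac{m-1}{2}-n$ on $[s,\frac{m-1}{2}-s]$, via Lemmas~\ref{P7 Lemma 1} and~\ref{P7 Lemma 2};
\item the vanishing of the terms with $n>\frac{m-1}{2}-s$, through the factor $(q;q^2)_{n+s}$;
\item on the right-hand side, the identity $[m+2s](q;q^2)_{\frac{m-1}{2}+s}=[m](q;q^2)_{\frac{m-1}{2}}(q^{m+2};q^2)_s$.
\end{itemize}
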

\begin{remark}
Setting $cd=q$ in \eqref{P7 Thm 1 eqn}, Theorem \ref{P7 Thm 1} reduces to Tang's $q$-congruence \eqref{Tang's Extn qC2}, thereby extending $q$-congruence \eqref{GuoWang qC2 2}. 
\end{remark}

We now present some consequences of Theorem~\ref{P7 Thm 1}. Taking $c=d=1$ in Theorem~\ref{P7 Thm 1} yields the following new $q$-extension of \eqref{GuoSchlsr q-long}.
\begin{corollary} \label{P7 cor 1}
    For any odd integer $m>1$, non-negative integer $s$ with $s\le \frac{m-1}{2}$, we have
\begin{align} \label{P7 cor 1 eqn}
&\sum_{n=s}^{\frac{m-1}{2}}[4n+1] \frac{(q;q^2)_{n-s}(q;q^2)_{n+s} (q;q^2)_n^4}{(q^2;q^2)_{n-s}(q^2;q^2)_{n+s} (q^2;q^2)_n^4}q^n \notag \\
&\quad \equiv
[m+2s]\frac{(q;q^2)_s^4 (q^2;q^2)_{\frac{m-1}{2}-s} (q;q^2)_{\frac{m-1}{2}+s}}{(q^2;q^2)_s^4 (q;q^2)_{\frac{m-1}{2}-s} (q^2;q^2)_{\frac{m-1}{2}+s}} q^{\frac{1-m}{2}+2s} \notag \\
&\qquad \times\sum_{n=0}^{\frac{m-1}{2}-s} \frac{(q;q^2)_n^2 (q^{2s+1};q^2)_n^2}{(q^2;q^2)_n^2(q^{2s+2}, q^{2s+2};q^2)_n} q^{2n} \pmod{\Phi_m(q)^3}.
\end{align}
\end{corollary}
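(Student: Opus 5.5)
Corollary~\ref{P7 cor 1} is the specialization $c=d=1$ of Theorem~\ref{P7 Thm 1}, so the proof consists in taking Theorem~\ref{P7 Thm 1} as given, setting $c=d=1$, and checking that every term on both sides of \eqref{P7 Thm 1 eqn} reduces to the corresponding term of \eqref{P7 cor 1 eqn}. The check goes factor by factor.

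\emph{Left-hand side.} With $c=d=1$ we have $(cq,dq;q^2)_n=(q;q^2)_n^2$ and $(q^2/c,q^2/d;q^2)_n=(q^2;q^2)_n^2$. Hence the ratio $(q;q^2)_n^2(cq,dq;q^2)_n/\bigl((q^2;q^2)_n^2(q^2/c,q^2/d;q^2)_n\bigr)$ becomes $(q;q^2)_n^4/(q^2;q^2)_n^4$. Also $(cd)^{-n}q^n=q^n$. So the left side of \eqref{P7 Thm 1 eqn} is exactly the left side of \eqref{P7 cor 1 eqn}.

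\emph{Right-hand side, prefactor.} In the same way, $(q;q^2)_s^2(cq,dq;q^2)_s/\bigl((q^2;q^2)_s^2(q^2/c,q^2/d;q^2)_s\bigr)$ becomes $(q;q^2)_s^4/(q^2;q^2)_s^4$, and $(cd)^{-s}=1$. The factors $[m+2s]$, the ratio of $(q^2;q^2)_{\frac{m-1}{2}-s}(q;q^2)_{\frac{m-1}{2}+s}$ to $(q;q^2)_{\frac{m-1}{2}-s}(q^2;q^2)_{\frac{m-1}{2}+s}$, and $q^{\frac{1-m}{2}+2s}$ do not involve $c$ or $d$, so they carry over unchanged.

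\emph{Right-hand side, inner sum.} Here $(q,q/cd;q^2)_n=(q;q^2)_n^2$, and $(q^{2s+2}/c,q^{2s+2}/d;q^2)_n=(q^{2s+2},q^{2s+2};q^2)_n$. This gives exactly the sum in \eqref{P7 cor 1 eqn}.

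\emph{Legitimacy of the specialization.} This is the only step needing care, though I expect it to be minor. The congruence in Theorem~\ref{P7 Thm 1} is understood in the ring of rational functions in $q$ (with $c,d$ as parameters) whose denominators are coprime to $\Phi_m(q)$. So one must confirm that setting $c=d=1$ introduces no denominator divisible by $\Phi_m(q)$.

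On the left, the denominators become products of factors $1-q^{2k}$ with $k\le\frac{m-1}{2}+s\le m-1$. Since $2k<2m$ and $2k$ is even while $m$ is odd, $m\nmid 2k$, so $\Phi_m(q)$ divides none of them.

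On the right, the denominators are $(q^2;q^2)_s$, $(q;q^2)_{\frac{m-1}{2}-s}$, $(q^2;q^2)_{\frac{m-1}{2}+s}$, $(q^2;q^2)_n$ and $(q^{2s+2};q^2)_n$ with $n\le\frac{m-1}{2}-s$. The factors of $(q^{2s+2};q^2)_n$ are $1-q^{2j}$ with $j\le\frac{m-1}{2}$. The factors of $(q;q^2)_{\frac{m-1}{2}-s}$ are $1-q^{2j+1}$ with $2j+1\le m-2s-2<m$. Every exponent occurring is therefore a positive integer that is either less than $m$ or even and less than $2m$, hence not divisible by $m$. Thus all denominators are coprime to $\Phi_m(q)$.

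The congruence modulo $\Phi_m(q)^3$ is therefore preserved under the substitution $c=d=1$, and \eqref{P7 cor 1 eqn} follows.
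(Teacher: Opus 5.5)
Your proposal is correct and is exactly the paper's argument: the corollary is obtained by setting $c=d=1$ in Theorem~\ref{P7 Thm 1}, and your factor-by-factor matching of both sides is right. Your additional check that every denominator stays coprime to $\Phi_m(q)$ after the specialization is a detail the paper leaves implicit, and you handle it correctly.
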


Setting $m=p$ in \eqref{P7 cor 1 eqn} and letting $q\to1$, we immediately obtain the following corollary.
\begin{corollary} \label{P7 Cor 2}
    For any odd prime $p$, and $0 \le s \le (p-1)/2$, we have
    \begin{align*} 
&\sum_{n=s}^{\frac{p-1}{2}}(4n+1) \frac{(\frac{1}{2})_{n-s}(\frac{1}{2})_{n+s} (\frac{1}{2})_n^4}{(1)_{n-s}(1)_{n+s} (1)_n^4} \notag \\
&\quad \equiv
(p+2s)\frac{(\frac{1}{2})_s^4 (1)_{\frac{p-1}{2}-s} (\frac{1}{2})_{\frac{p-1}{2}+s}}{(1)_s^4 (\frac{1}{2})_{\frac{p-1}{2}-s} (1)_{\frac{p-1}{2}+s}}  \sum_{n=0}^{\frac{p-1}{2}-s} \frac{(\frac{1}{2})_n^2 (s+\frac{1}{2})_n^2}{(1)_n^2(s+1)_n^2} \pmod{p^3}.
\end{align*}
\end{corollary}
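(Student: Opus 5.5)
The plan is to specialize Corollary~\ref{P7 cor 1} to $m=p$ and let $q\to1$. The work is in justifying that a congruence modulo $\Phi_p(q)^3$ between rational functions of $q$ becomes a congruence modulo $p^3$ at $q=1$. For this I must check that no denominator involves $\Phi_p(q)$, and that all powers of $(1-q)$ cancel so that the limit is a finite rational number.

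\textbf{Step 1 (denominators).} I will check that every denominator in \eqref{P7 cor 1 eqn} with $m=p$ is a product of factors $1-q^k$ with $p\nmid k$.
\begin{itemize}
\item On the left, $(q^2;q^2)_{n\pm s}$ and $(q^2;q^2)_n$ have $n\pm s\le \frac{p-1}{2}+s\le p-1$. So the exponents $2j$ satisfy $2\le 2j\le 2p-2$, and since $p$ is odd, $p\nmid 2j$.
\item On the right, $(q^2;q^2)_s$ and $(q^2;q^2)_{\frac{p-1}{2}+s}$ are handled the same way.
\item $(q;q^2)_{\frac{p-1}{2}-s}$ only has exponents $2j+1\le p-2$.
\item $(q^{2s+2};q^2)_n$ with $n\le\frac{p-1}{2}-s$ has exponents at most $2s+2n\le p-1$.
\end{itemize}
Hence, after dividing numerator and denominator by suitable powers of $1-q$, the difference LHS$-$RHS can be written as $\Phi_p(q)^3 P(q)/D(q)$. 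Here $P(q)\in\mathbb{Z}[q]$ and $D(q)$ is a product of $q$-integers $[k]$ with $p\nmid k$ (times a power of $q$), so $D(1)$ is an integer prime to $p$.

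\textbf{Step 2 (limits).} I will use the standard limits
\[
(q;q^2)_n/(1-q)^n\to 2^n(\tfrac12)_n,\qquad (q^2;q^2)_n/(1-q)^n\to 2^n n!,\qquad (q^{2s+1};q^2)_n/(1-q)^n\to 2^n(s+\tfrac12)_n,
\]
\[
(q^{2s+2};q^2)_n/(1-q)^n\to 2^n(s+1)_n,\qquad [4n+1]\to 4n+1,\qquad [p+2s]\to p+2s.
\]
All powers of $q$ tend to $1$. I then verify that in each term the numbers of $q$-shifted factorials in numerator and denominator balance:
\begin{itemize}
\item On the left there are $(n-s)+(n+s)+4n=6n$ factors on each side.
\item In the prefactor on the right there are $4s+(\frac{p-1}{2}-s)+(\frac{p-1}{2}+s)$ factors on each side.
\item In the inner sum there are $4n$ factors on each side.
\end{itemize}
Thus all factors $(1-q)$ and $2$ cancel. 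Each side converges to the corresponding expression in Corollary~\ref{P7 Cor 2}, with $(q^{2s+2},q^{2s+2};q^2)_n$ giving $(s+1)_n^2$.

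\textbf{Step 3 (conclusion).} Evaluating $\Phi_p(q)^3P(q)/D(q)$ at $q=1$ gives $p^3P(1)/D(1)$, with $P(1)\in\mathbb{Z}$ and $\gcd(D(1),p)=1$. So the difference of the two limiting rational numbers is $p^3$ times a $p$-integral rational, which is the claimed congruence modulo $p^3$.

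The main obstacle is Step 1: confirming, uniformly for $0\le s\le\frac{p-1}{2}$, that no factor divisible by $\Phi_p(q)$ hides in any denominator (the bounds $\frac{p-1}{2}+s\le p-1$ and $2s+2n\le p-1$ are exactly what is needed). Once that is settled, the rest is bookkeeping.
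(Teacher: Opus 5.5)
Your proposal is correct and follows the same route as the paper, which simply sets $m=p$ in Corollary~\ref{P7 cor 1} and lets $q\to1$. Your checks supply exactly the justification the paper leaves implicit: every denominator is a product of $[k]$ with $p\nmid k$ (using $n+s\le p-1$ and $2s+2n\le p-1$), and the $(1-q)$-factors balance, so the $\Phi_p(q)^3$-divisibility becomes divisibility by $p^3$ at $q=1$.
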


Setting $c=1$ and letting $d\to\infty$ in Theorem~\ref{P7 Thm 1}, we obtain the following new $q$-extension of \eqref{GuoSchlsr q-power 5}.
\begin{corollary} \label{P7 Cor 3}
     For any odd integer $m>1$, non-negative integer $s$ with $s\le \frac{m-1}{2}$, we have
\begin{align}\label{P7 Cor 3 eq}
&\sum_{n=s}^{\frac{m-1}{2}} (-1)^n[4n+1] \frac{(q;q^2)_{n-s}(q;q^2)_{n+s} (q;q^2)_n^3}{(q^2;q^2)_{n-s}(q^2;q^2)_{n+s} (q^2;q^2)_n^3}q^{n^2+n} \notag \\
&\quad \equiv
(-1)^s[m+2s]\frac{(q;q^2)_s^3 (q^2;q^2)_{\frac{m-1}{2}-s} (q;q^2)_{\frac{m-1}{2}+s}}{(q^2;q^2)_s^3 (q;q^2)_{\frac{m-1}{2}-s} (q^2;q^2)_{\frac{m-1}{2}+s}} q^{\frac{1-m}{2}+s(s+2)} \notag \\
&\qquad \times\sum_{n=0}^{\frac{m-1}{2}-s} \frac{(q;q^2)_n (q^{2s+1};q^2)_n^2}{(q^2;q^2)_n^2(q^{2s+2};q^2)_n} q^{2n} \pmod{\Phi_m(q)^3}.
\end{align}
\end{corollary}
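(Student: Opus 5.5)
The plan is to obtain Corollary~\ref{P7 Cor 3} as a specialization of Theorem~\ref{P7 Thm 1}: set $c=1$, then let $d\to\infty$. Since this is a limit, I will set $e=1/d$ and regard both sides of \eqref{P7 Thm 1 eqn}, with $c=1$, as rational functions in $q$ and $e$. I then argue that the congruence modulo $\Phi_m(q)^3$ survives the specialization $e=0$.

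First I compute the termwise limits. Using $(dq;q^2)_n=\prod_{k=0}^{n-1}(1-dq^{2k+1})$, we get
\[
\frac{(dq;q^2)_n}{(q^2/d;q^2)_n}\,d^{-n}=\frac{\prod_{k=0}^{n-1}(e-q^{2k+1})}{(eq^2;q^2)_n}\longrightarrow(-1)^nq^{n^2}\qquad(e\to0).
\]
With $c=1$ the factors $(cq;q^2)_n/(q^2/c;q^2)_n$ combine with $(q;q^2)_n^2/(q^2;q^2)_n^2$ to give $(q;q^2)_n^3/(q^2;q^2)_n^3$. Hence the summand on the left becomes
\[
(-1)^n[4n+1]\frac{(q;q^2)_{n-s}(q;q^2)_{n+s}(q;q^2)_n^3}{(q^2;q^2)_{n-s}(q^2;q^2)_{n+s}(q^2;q^2)_n^3}\,q^{n^2+n}.
\]
On the right, the same computation with $n=s$ gives the prefactor $(-1)^s(q;q^2)_s^3/(q^2;q^2)_s^3$. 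The power of $q$ becomes $q^{\frac{1-m}{2}+2s+s^2}=q^{\frac{1-m}{2}+s(s+2)}$. In the inner sum, $(q/cd;q^2)_n=(eq;q^2)_n\to1$ and $(q^{2s+2}/d;q^2)_n\to1$, while $(q^{2s+2}/c;q^2)_n=(q^{2s+2};q^2)_n$. This reproduces exactly the right-hand side of \eqref{P7 Cor 3 eq}.

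The main point to justify is that the limit is legitimate modulo $\Phi_m(q)^3$. I would read Theorem~\ref{P7 Thm 1} as saying that the difference of the two sides equals $\Phi_m(q)^3\,R(q,e)$. Here $R$ is a rational function whose denominator is a product of the denominators appearing in \eqref{P7 Thm 1 eqn}, namely factors $(q^2;q^2)_k$, $(q;q^2)_k$ and $(eq^2;q^2)_k$, $(eq^{2s+2};q^2)_k$. Each $(e\,\cdot;q^2)_k$ factor equals $1$ at $e=0$. For the remaining factors, all indices satisfy $k\le \frac{m-1}{2}+s\le m-1$, so $(q^2;q^2)_k$ contains no factor $1-q^{2j}$ with $m\mid 2j$, that is, $j=m$. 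Likewise the only $(q;q^2)_k$ in a denominator is $(q;q^2)_{\frac{m-1}{2}-s}$, whose factors $1-q^{2j+1}$ have $2j+1<m$. Thus all of these are coprime to $\Phi_m(q)$.

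Consequently $R(q,e)$ is regular at $e=0$, and $R(q,0)$ has denominator coprime to $\Phi_m(q)$. Setting $e=0$ therefore gives the difference of the two sides of \eqref{P7 Cor 3 eq} as $\Phi_m(q)^3R(q,0)$, which is the claimed congruence. The only step needing care is this bookkeeping of denominators, and in particular that the polynomial dependence on $e$ does not interfere with divisibility by $\Phi_m(q)^3$. The exponent arithmetic for the $q$-powers and signs is routine.
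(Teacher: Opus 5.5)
Your proposal is correct and follows the paper's own argument: the paper obtains Corollary~\ref{P7 Cor 3} in one line by setting $c=1$ and letting $d\to\infty$ in Theorem~\ref{P7 Thm 1}, and your termwise limits (in particular $d^{-n}(dq;q^2)_n/(q^2/d;q^2)_n\to(-1)^nq^{n^2}$) reproduce both sides exactly. What you add is a justification the paper omits: with $e=1/d$, the $e$-dependent denominators equal $1$ at $e=0$, the remaining denominators are coprime to $\Phi_m(q)$, and since $\Phi_m(q)^3$ is monic in $q$ and free of $e$ the quotient stays polynomial in $e$, so the congruence survives the specialization.
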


Putting $m=p$ in \eqref{P7 Cor 3 eq} and then taking the limit as $q\to1$, we arrive at the following corollary.
\begin{corollary} \label{P7 Cor 4}
    For any odd prime $p$, and $0 \le s \le (p-1)/2$, we have
    \begin{align} \label{P8 Cor 4 eqn}
&\sum_{n=s}^{\frac{p-1}{2}} (-1)^n (4n+1) \frac{(\frac{1}{2})_{n-s}(\frac{1}{2})_{n+s} (\frac{1}{2})_n^3}{(1)_{n-s}(1)_{n+s} (1)_n^3} \notag \\
&\quad \equiv
(-1)^s(p+2s)\frac{(\frac{1}{2})_s^3 (1)_{\frac{p-1}{2}-s} (\frac{1}{2})_{\frac{p-1}{2}+s}}{(1)_s^3 (\frac{1}{2})_{\frac{p-1}{2}-s} (1)_{\frac{p-1}{2}+s}}  \sum_{n=0}^{\frac{p-1}{2}-s} \frac{(\frac{1}{2})_n (s+\frac{1}{2})_n^2}{(1)_n^2(s+1)_n} \pmod{p^3}.
\end{align}
\end{corollary}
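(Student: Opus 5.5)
Corollary~\ref{P7 Cor 4} should follow from Corollary~\ref{P7 Cor 3} by specialising $m=p$ and letting $q\to1$. Corollary~\ref{P7 Cor 3} in turn is the case $c=1$, $d\to\infty$ of Theorem~\ref{P7 Thm 1}, and we take both results as given. So the work is only to justify the passage from a $q$-congruence modulo $\Phi_p(q)^3$ to an integer congruence modulo $p^3$, and to identify the limits of the individual factors.

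\textbf{Step 1 (clearing denominators).} Take $m=p$ in \eqref{P7 Cor 3 eq}. Every denominator on either side is a product of factors $1-q^{2k}$ with $1\le k\le \frac{p-1}{2}+s\le p-1$ on the left, and $1-q^{j}$ with $j\le p-1$ or $j=2s+2k$ with $k\le \frac{p-1}{2}-s$ on the right. Hence none is divisible by $\Phi_p(q)$. Let $D(q)$ be the product of all these denominators. Then $D(q)\cdot(\mathrm{LHS}-\mathrm{RHS})$ is a polynomial in $\mathbb{Z}[q]$ (after multiplying by a suitable power of $q$), and it is divisible by $\Phi_p(q)^3$.

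\textbf{Step 2 (letting $q\to1$).} Evaluate at $q=1$. Since $\Phi_p(1)=p$, the polynomial identity $D(q)(\mathrm{LHS}-\mathrm{RHS})=\Phi_p(q)^3 G(q)$ with $G\in\mathbb{Z}[q]$ gives $D(1)(\mathrm{LHS}-\mathrm{RHS})|_{q=1}\in p^3\mathbb{Z}$. Moreover $D(1)$ is a product of integers each at most $2p-2$ and not divisible by $p$, so it is a $p$-adic unit. Dividing by it, the $q=1$ values of the two sides agree modulo $p^3$ in $\mathbb{Z}_{(p)}$. Note that $q=1$ must be understood after cancelling the common factors $(1-q)$; this is harmless because each ratio $(q;q^2)_n/(q^2;q^2)_n$ has equally many $(1-q)$ factors in numerator and denominator.

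\textbf{Step 3 (identifying the limits).} We use
\[
\frac{(q;q^2)_n}{(q^2;q^2)_n}\to\frac{(\frac12)_n}{(1)_n},\qquad \frac{(q^{2s+1};q^2)_n}{(q^{2s+2};q^2)_n}\to\frac{(s+\frac12)_n}{(s+1)_n},
\]
together with $[4n+1]\to 4n+1$, $[p+2s]\to p+2s$, and the fact that all powers of $q$ tend to $1$. With these, the left side of \eqref{P7 Cor 3 eq} becomes the left side of \eqref{P8 Cor 4 eqn}. Similarly, the prefactor
\[
(q;q^2)_s^3(q^2;q^2)_{\frac{p-1}{2}-s}(q;q^2)_{\frac{p-1}{2}+s}\big/\bigl((q^2;q^2)_s^3(q;q^2)_{\frac{p-1}{2}-s}(q^2;q^2)_{\frac{p-1}{2}+s}\bigr)
\]
becomes the stated ratio of Pochhammer symbols. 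The summand $(q;q^2)_n(q^{2s+1};q^2)_n^2q^{2n}/\bigl((q^2;q^2)_n^2(q^{2s+2};q^2)_n\bigr)$ becomes $(\frac12)_n(s+\frac12)_n^2/\bigl((1)_n^2(s+1)_n\bigr)$, and the sign $(-1)^s$ is carried over unchanged.

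The only point requiring care is the $p$-adic unit claim in Step 2. Here one checks that the largest index appearing, $2(\frac{p-1}{2}+s)\le 2p-2$, is even, so the factor $1-q^{2k}$ could involve $\Phi_p$ only if $p\mid 2k$, i.e. $k\ge p$. That is impossible for $k\le p-1$. The same reasoning applies to $(q^{2s+2};q^2)_n$, whose factors $1-q^{2s+2j}$ have $s+j\le\frac{p-1}{2}<p$. This completes the proof.
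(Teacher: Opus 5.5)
Your proposal is correct and follows the paper's route exactly: put $m=p$ in Corollary~\ref{P7 Cor 3} and let $q\to1$. Your Steps 1--3 spell out the standard justification that the paper leaves implicit. One correction is needed: take $D(q)$ to be the product of the $q$-integers $[k]$ left in the denominators after cancelling the factors $1-q$, because the product of the factors $1-q^{k}$ has $D(1)=0$. Also, your Step 1 list of right-hand denominators omits the factors $1-q^{2k}$, $k\le\frac{p-1}{2}+s$, of $(q^2;q^2)_{\frac{p-1}{2}+s}$, but your final paragraph already covers them.
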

\begin{remark}
    Putting $s=0$ in the congruence \eqref{P8 Cor 4 eqn}, we achieve \eqref{GuoSchlsr remakr}.
\end{remark}
Moreover, setting $c=-1$ and $d=1$ in Theorem~\ref{P7 Thm 1}, we obtain the following  result.

\begin{corollary}
    For any odd integer $m>1$, non-negative integer $s$ with $s\le \frac{m-1}{2}$, we have
\begin{align*}
    &\sum_{n=s}^{\frac{m-1}{2}} (-1)^n[4n+1] \frac{(q;q^2)_{n-s}(q;q^2)_{n+s} (q;q^2)_n^2(q^2;q^4)_n}{(q^2;q^2)_{n-s}(q^2;q^2)_{n+s} (q^2;q^2)_n^2 (q^4; q^4)_n}q^n \notag \\
&\quad \equiv
(-1)^s[m+2s]\frac{(q;q^2)_s^2(q^2;q^4)_s (q^2;q^2)_{\frac{m-1}{2}-s} (q;q^2)_{\frac{m-1}{2}+s}}{(q^2;q^2)_s^2(q^4;q^4)_s (q;q^2)_{\frac{m-1}{2}-s} (q^2;q^2)_{\frac{m-1}{2}+s}} q^{\frac{1-m}{2}+2s} \notag \\
&\qquad \times\sum_{n=0}^{\frac{m-1}{2}-s} \frac{(q^2;q^4)_n (q^{2s+1};q^2)_n^2}{(q^2;q^2)_n^2(q^{4s+4};q^4)_n} q^{2n} \pmod{\Phi_m(q)^3}.
\end{align*}
\end{corollary}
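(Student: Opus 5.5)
The last corollary is the case $c=-1$, $d=1$ of Theorem~\ref{P7 Thm 1}. So the proof will be a direct substitution into \eqref{P7 Thm 1 eqn}, followed by simplification of the $q$-shifted factorials; no new congruence machinery is needed.

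First substitute $c=-1$, $d=1$ on the left-hand side. Then $(cq,dq;q^2)_n=(-q;q^2)_n(q;q^2)_n$ and $(q^2/c,q^2/d;q^2)_n=(-q^2;q^2)_n(q^2;q^2)_n$. Using $(-q;q^2)_n(q;q^2)_n=(q^2;q^4)_n$ and $(-q^2;q^2)_n(q^2;q^2)_n=(q^4;q^4)_n$, the factor $(q;q^2)_n^2(cq,dq;q^2)_n/\bigl((q^2;q^2)_n^2(q^2/c,q^2/d;q^2)_n\bigr)$ becomes $(q;q^2)_n^2(q^2;q^4)_n/\bigl((q^2;q^2)_n^2(q^4;q^4)_n\bigr)$. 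Since $(cd)^{-n}=(-1)^n$, the summand on the left of \eqref{P7 Thm 1 eqn} becomes exactly the summand stated in the corollary.

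Next treat the prefactor on the right in the same way. With $n=s$, the same identities turn $(cq,dq;q^2)_s/(q^2/c,q^2/d;q^2)_s$ into $(q^2;q^4)_s/(q^4;q^4)_s$. The factor $(cd)^{-s}$ becomes $(-1)^s$, and the power $q^{\frac{1-m}{2}+2s}$ is unchanged.

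Then handle the inner sum. Here $q/cd=-q$, so $(q,q/cd;q^2)_n=(q;q^2)_n(-q;q^2)_n=(q^2;q^4)_n$. In the denominator, $(q^{2s+2}/c,q^{2s+2}/d;q^2)_n=(-q^{2s+2};q^2)_n(q^{2s+2};q^2)_n$, and since $(1-x)(1+x)=1-x^2$ factor by factor, this equals $(q^{4s+4};q^4)_n$. This reproduces the inner sum of the corollary.

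One point needs care. The choice $c=-1$ must not create a zero denominator modulo $\Phi_m(q)$. The new denominators are $(q^4;q^4)_n$ with $n\le (m-1)/2$ and $(q^{4s+4};q^4)_n$ with $n\le (m-1)/2-s$. Their factors are $1-q^{4k}$ with $4k\le 2m-2$, and with $k$ running from $s+1$ to $(m-1)/2$ the exponents $4k$ are even numbers at most $2m-2$. Such a number is divisible by $m$ only if it equals $2m$ or $0$ (since $m$ is odd), which is impossible here. Hence $\Phi_m(q)$ divides none of these factors, and both sides are well defined modulo $\Phi_m(q)^3$.

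I expect no real obstacle. The only thing to check is that the theorem, which is presumably proved for indeterminate $c,d$ via creative microscoping, may be specialised at $c=-1$, $d=1$. This holds because the congruence is an identity of rational functions in $c,d$ whose denominators remain coprime to $\Phi_m(q)$ after specialisation, as just verified.
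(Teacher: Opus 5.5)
Your proposal is correct and is exactly the paper's argument: the corollary is the specialisation $c=-1$, $d=1$ of Theorem~\ref{P7 Thm 1}, with the pairs $(\pm x;q^2)_n$ collapsed into base-$q^4$ factorials as you describe. Your extra check that the new denominators $(q^4;q^4)_n$ and $(q^{4s+4};q^4)_n$ remain coprime to $\Phi_m(q)$, which makes the specialisation legitimate, is a detail the paper leaves implicit.
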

Setting $c=q^{-2}$ and letting $d\to\infty$ in Theorem~\ref{P7 Thm 1}, we obtain the following corollary.
\begin{corollary}
     For any odd integer $m>1$, non-negative integer $s$ with $s\le \frac{m-1}{2}$, we have
     \begin{align*} 
&\sum_{n=s}^{\frac{m-1}{2}} (-1)^n[4n+1] \frac{(q;q^2)_{n-s}(q;q^2)_{n+s} (q;q^2)_n^2(q^{-1};q^2)_n }{(q^2;q^2)_{n-s}(q^2;q^2)_{n+s} (q^2;q^2)_n^2 (q^4; q^2)_n}q^{n^2+3n} \notag \\
&\quad \equiv
(-1)^s[m+2s]\frac{(q;q^2)_s^2(q^{-1};q^2)_s (q^2;q^2)_{\frac{m-1}{2}-s} (q;q^2)_{\frac{m-1}{2}+s}}{(q^2;q^2)_s^2(q^4;q^2)_s (q;q^2)_{\frac{m-1}{2}-s} (q^2;q^2)_{\frac{m-1}{2}+s}} q^{\frac{1-m}{2}+s(s+4)} \notag \\
&\qquad \times\sum_{n=0}^{\frac{m-1}{2}-s} \frac{(q;q^2)_n (q^{2s+1};q^2)_n^2}{(q^2;q^2)_n^2(q^{2s+4};q^2)_n} q^{2n} \pmod{\Phi_m(q)^3}.
\end{align*}
\end{corollary}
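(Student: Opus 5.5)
The corollary is the specialization $c=q^{-2}$, $d\to\infty$ of Theorem~\ref{P7 Thm 1}. I would check termwise that both sides of \eqref{P7 Thm 1 eqn} tend to the stated expressions, and then justify that the congruence modulo $\Phi_m(q)^3$ survives the substitution and the limit.

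\emph{Step 1 (left-hand side).} With $c=q^{-2}$ we have $(cq;q^2)_n=(q^{-1};q^2)_n$, $(q^2/c;q^2)_n=(q^4;q^2)_n$ and $c^{-n}=q^{2n}$. For the $d$-dependence, $(dq;q^2)_n=\prod_{k=0}^{n-1}(1-dq^{2k+1})$ is a polynomial in $d$ with leading term $(-d)^n q^{n^2}$, and $(q^2/d;q^2)_n\to 1$. Hence
\[
\frac{(dq;q^2)_n}{(q^2/d;q^2)_n}\,d^{-n}\longrightarrow (-1)^n q^{n^2}\qquad (d\to\infty).
\]
Together with $c^{-n}q^n=q^{3n}$, the summand acquires the factor $(-1)^nq^{n^2+3n}$ and the extra quotient $(q^{-1};q^2)_n/(q^4;q^2)_n$. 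This is exactly the left-hand side of the corollary.

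\emph{Step 2 (right-hand side).} The same computation with $n=s$ gives
\[
(cd)^{-s}\frac{(dq;q^2)_s}{(q^2/d;q^2)_s}\longrightarrow(-1)^sq^{s^2+2s}.
\]
Combined with $q^{\frac{1-m}{2}+2s}$, this produces $(-1)^sq^{\frac{1-m}{2}+s(s+4)}$. In the inner sum, $(q/cd;q^2)_n\to1$ and $(q^{2s+2}/d;q^2)_n\to1$, while $(q^{2s+2}/c;q^2)_n=(q^{2s+4};q^2)_n$. This yields the stated right-hand side.

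\emph{Step 3 (legitimacy).} This is the only real point to check, so I would argue it carefully. Theorem~\ref{P7 Thm 1} is an identity of rational functions in $q,c,d$ of the form $\text{LHS}-\text{RHS}=\Phi_m(q)^3\,R(q,c,d)$. Here $R$ has a denominator that is a product of the denominators appearing on the two sides, and those are coprime to $\Phi_m(q)$ as polynomials in $q$ with generic $c,d$.

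First substitute $c=q^{-2}$. I must check that no new factor $\Phi_m(q)$ appears in a denominator. The relevant factors are $1-q^{2k+4}$ with $0\le k\le \frac{m-3}{2}$, coming from $(q^4;q^2)_n$ for $n\le\frac{m-1}{2}$, and $1-q^{2s+2k+4}$ with $2s+2k+4\le m+1$, coming from $(q^{2s+4};q^2)_n$ for $n\le \frac{m-1}{2}-s$.
\begin{itemize}
\item The exponents in the first family satisfy $4\le 2k+4\le m+1$.
\item The exponents in the second family satisfy $2s+2k+4\le m+1$.
\end{itemize}
All these exponents are even and at most $m+1<2m$. Since $m$ is odd, none of them is a multiple of $m$, so $\Phi_m(q)$ does not divide any of these factors.

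Next treat the $d$-dependence. The $d$-dependent denominators are $(q^2/d;q^2)_n$ and $(q^{2s+2}/d;q^2)_n$, which are polynomials in $1/d$ with constant term $1$. The numerators, after including $d^{-n}$ and $d^{-s}$, are polynomials in $1/d$, because $(dq;q^2)_nd^{-n}$ has degree at most $0$ in $d$. Therefore $R$, viewed as a function of $1/d$, is regular at $1/d=0$, and its value there still has denominator coprime to $\Phi_m(q)$.

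Letting $d\to\infty$, i.e. setting $1/d=0$, therefore preserves the congruence modulo $\Phi_m(q)^3$, which completes the proof.
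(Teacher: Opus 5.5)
Your proposal is correct and takes the same route as the paper, which obtains this corollary in one line by setting $c=q^{-2}$ and letting $d\to\infty$ in Theorem~\ref{P7 Thm 1}; your termwise limits reproduce the statement exactly, namely the factor $(-1)^nq^{n^2+3n}$, the prefactor $(-1)^sq^{\frac{1-m}{2}+s(s+4)}$, $(q/cd;q^2)_n,(q^{2s+2}/d;q^2)_n\to1$, and $(q^{2s+2}/c;q^2)_n=(q^{2s+4};q^2)_n$. Your Step~3 adds a justification the paper leaves implicit: the new denominators $(q^4;q^2)_n$ and $(q^{2s+4};q^2)_n$ have only even exponents at most $m+1$, so they are coprime to $\Phi_m(q)$, and both sides are regular at $1/d=0$.
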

Similarly, setting $c=q$ and letting $d\to\infty$ in Theorem~\ref{P7 Thm 1}, we obtain the following corollary.
\begin{corollary}
     For any odd integer $m>1$, non-negative integer $s$ with $s\le \frac{m-1}{2}$, we have
     \begin{align*}
         &\sum_{n=s}^{\frac{m-1}{2}} (-1)^n[4n+1] \frac{(q;q^2)_{n-s}(q;q^2)_{n+s} (q;q^2)_n}{(q^2;q^2)_{n-s}(q^2;q^2)_{n+s} (q^2;q^2)_n}q^{n^2} \notag \\
&\quad \equiv
(-1)^s[m+2s]\frac{(q;q^2)_s (q^2;q^2)_{\frac{m-1}{2}-s} (q;q^2)_{\frac{m-1}{2}+s}}{(q^2;q^2)_s (q;q^2)_{\frac{m-1}{2}-s} (q^2;q^2)_{\frac{m-1}{2}+s}} q^{\frac{1-m}{2}+s(s+1)} \notag \\
&\qquad \times\sum_{n=0}^{\frac{m-1}{2}-s} \frac{(q;q^2)_n (q^{2s+1};q^2)_n^2}{(q^2;q^2)_n^2(q^{2s+1};q^2)_n} q^{2n} \pmod{\Phi_m(q)^3}.
     \end{align*}
\end{corollary}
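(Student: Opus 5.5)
The plan is to obtain this corollary as a direct specialization of Theorem~\ref{P7 Thm 1}: first set $c=q$, then let $d\to\infty$. The point needing care is that both operations preserve a congruence modulo $\Phi_m(q)^3$.

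\emph{Setting $c=q$.} I will regard \eqref{P7 Thm 1 eqn} as a congruence in the ring of rational functions in $q$ (and $d$), where the difference of the two sides equals $\Phi_m(q)^3$ times a rational function whose denominator is coprime to $\Phi_m(q)$. After specializing $c=q$, the relevant denominators are:
\begin{itemize}
\item $(q^2/c;q^2)_n=(q;q^2)_n$ with $n\le \frac{m-1}{2}$;
\item $(q^{2s+2}/c;q^2)_n=(q^{2s+1};q^2)_n$ with $n\le \frac{m-1}{2}-s$;
\item $(q;q^2)_{\frac{m-1}{2}-s}$ and $(q^2;q^2)_{\frac{m-1}{2}+s}$.
\end{itemize}
In each of these, every factor has the form $1-q^j$ with $j$ either at most $m-2$ or even and at most $m-1$. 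Hence $m\nmid j$, so $\Phi_m(q)$ divides none of them and the specialization is legitimate. With $c=q$ we have $(cq;q^2)_n=(q^2;q^2)_n$ and $(q^2/c;q^2)_n=(q;q^2)_n$. These cancel one factor each, turning $(q;q^2)_n^2/(q^2;q^2)_n^2$ into $(q;q^2)_n/(q^2;q^2)_n$. The same cancellation happens with $n=s$ in the prefactor on the right. In the sum on the right, $(q^{2s+2}/c;q^2)_n$ becomes $(q^{2s+1};q^2)_n$.

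\emph{Letting $d\to\infty$.} Put $d=1/t$. Then
\[
\frac{(dq;q^2)_n}{(q^2/d;q^2)_n}\,d^{-n}=\frac{\prod_{k=0}^{n-1}(t-q^{2k+1})}{(tq^2;q^2)_n},
\]
which is a rational function of $t$, regular at $t=0$, with value $(-1)^nq^{n^2}$ there. Likewise $(q/cd;q^2)_n=(t;q^2)_n\to 1$ and $(q^{2s+2}/d;q^2)_n\to 1$. I will write the difference of the two sides, after setting $c=q$, as
\[
\Phi_m(q)^3\,\frac{A(q,t)}{B(q,t)},
\]
where $B$ is a product of the factors above (together with factors of the form $1-tq^{j}$) and is coprime to $\Phi_m(q)$ at $t=0$. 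Setting $t=0$ then gives the desired congruence modulo $\Phi_m(q)^3$. This step is the only real obstacle: one must check that the congruence is polynomial, or at least regular, in $t$ near $0$. Since every $d$-dependent denominator has the form $1-tq^{j}$, this holds.

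\emph{Collecting the powers of $q$.} On the left, the terms carry $c^{-n}q^n=1$ and the $d$-part gives $(-1)^nq^{n^2}$, producing $(-1)^nq^{n^2}$. On the right, the factor $(cd)^{-s}q^{\frac{1-m}{2}+2s}$ becomes
\[
(-1)^s q^{s^2}\,q^{-s}\,q^{\frac{1-m}{2}+2s}=(-1)^s q^{\frac{1-m}{2}+s(s+1)}.
\]
The remaining sum is $\sum_n \frac{(q;q^2)_n(q^{2s+1};q^2)_n^2}{(q^2;q^2)_n^2(q^{2s+1};q^2)_n}q^{2n}$. This is exactly the stated corollary.
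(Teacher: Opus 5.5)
Your proposal is correct and follows the paper's route exactly: the paper simply sets $c=q$ and lets $d\to\infty$ in Theorem~\ref{P7 Thm 1}, and your cancellations and the resulting factor $(-1)^s q^{\frac{1-m}{2}+s(s+1)}$ all check out. Two minor points of precision. First, the factors of $(q^2;q^2)_{\frac{m-1}{2}+s}$ are $1-q^{2k}$ with $2k$ up to $m-1+2s\le 2m-2$, not $m-1$; this is harmless, since an even exponent below $2m$ is never a multiple of the odd $m$. Second, regularity of the quotient at $t=0$ is best justified by noting that $\Phi_m(q)^3$ is monic in $q$, so exactly dividing a polynomial in $t,q$ by it leaves a polynomial in $t$.
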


In addition, using Theorem~\ref{P7 Thm 1}, we establish a new $q$-supercongruence modulo $\Phi_m(q)^5$, which extends the previous results.
\begin{theorem} \label{P7 Thm 2}
    Let $m>1$ be an odd integer and $s$ be a non-negative integer with $s\le \frac{m-1}{2}$. Then, modulo $\Phi_m(q)^5,$
\begin{align} \label{P7 Thm 2 eqn}
&\sum_{n=s}^{\frac{m-1}{2}}[4n+1] \frac{(q;q^2)_{n-s}(q;q^2)_{n+s} (q;q^2)_n^4}{(q^2;q^2)_{n-s}(q^2;q^2)_{n+s} (q^2;q^2)_n^4}q^n \notag \\
&\quad \equiv
[m+2s]\frac{(q;q^2)_s^4 (q^2;q^2)_{\frac{m-1}{2}-s} (q;q^2)_{\frac{m-1}{2}+s}}{(q^2;q^2)_s^4(q;q^2)_{\frac{m-1}{2}-s} (q^2;q^2)_{\frac{m-1}{2}+s}} q^{\frac{1-m}{2}+2s} \notag \\
&\qquad \times \sum_{n=0}^{\frac{m-1}{2}-s} \frac{(q;q^2)_n^2 (q^{2s+1};q^2)_n^2}{(q^2;q^2)_n^2(q^{2s+2};q^2)_n^2} q^{2n} \left\{ 1 - (2-q^m)[m]^2 \sum_{i=1}^{n} \Theta_q(s,i)\right\},
\end{align}
where 
\begin{align*}
    \Theta_q(s,i) = \frac{q^{2(s+i)-1}}{[2(s+i)-1]^2} + \frac{q^{2(s+i)}}{[2(s+i)]^2}.
\end{align*}
\end{theorem}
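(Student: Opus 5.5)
Our plan is to derive Theorem~\ref{P7 Thm 2} from Theorem~\ref{P7 Thm 1} by the creative microscoping idea applied to the parameters $c,d$. We specialize $c=q^{m}$, $d=q^{-m}$ (so $cd=1$) in the parametric congruence. The key point is that Theorem~\ref{P7 Thm 1}, as an identity in $c,d$, is in fact proved (via Watson's ${}_8\phi_7$ transformation) as an exact identity when $c$ or $d$ equals $q^{\pm m}$. This makes the left-hand side a genuine rational function of $q$ whose difference from the $c=d=1$ sum is controlled by $(1-q^m)^2$ symmetric terms. Concretely, the left side of \eqref{P7 Thm 1 eqn} with $c=q^m,d=q^{-m}$ contains the factor
\[
\frac{(q^{1+m},q^{1-m};q^2)_n}{(q^{2-m},q^{2+m};q^2)_n},
\]
and the right side contains
\[
\frac{(q^{2s+1};q^2)_n^2}{(q^{2s+2-m},q^{2s+2+m};q^2)_n}.
\]
The first step is therefore to establish that the specialized identity holds modulo $\Phi_m(q)^5$, or even exactly. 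I would redo the Watson ${}_8\phi_7$ argument from the proof of Theorem~\ref{P7 Thm 1} with $a=q$, $b=q^{1+2s}$ or $q^{1-2s}$, $c=q^{1+m}$, $d=q^{1-m}$, $e=q^{1-m}$. The ${}_8\phi_7$ then terminates and the Watson transform gives an exact evaluation, with the prefactor $[m+2s]\cdots$ arising as before.

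Next we expand in powers of $(1-q^m)$. On the left, $(q^{1+m},q^{1-m};q^2)_n$ versus $(q;q^2)_n^2$ differs by
\[
\prod_{j}\bigl(1-q^{2j-1}(q^m+q^{-m})+q^{4j-2}\bigr),
\]
and $q^m+q^{-m}-2=(1-q^m)^2q^{-m}$. Hence
\[
(q^{1+m},q^{1-m};q^2)_n\equiv(q;q^2)_n^2\Bigl(1-(1-q^m)^2q^{-m}\sum_{j\le n}\frac{q^{2j-1}}{(1-q^{2j-1})^2}\Bigr)
\]
modulo $(1-q^m)^4$. The same holds for the denominators with even exponents. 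Since $(1-q^m)^2=(1-q)^2[m]^2$ and $q^{-m}\equiv 2-q^m$ modulo $\Phi_m(q)^2$, this produces exactly the factor $(2-q^m)[m]^2/[k]^2$ appearing in $\Theta_q$. On the left side the summand carries $(q;q^2)_n^4$ only up to such corrections, and these corrections are multiplied by a sum already divisible by $\Phi_m(q)^3$ (by Corollary~\ref{P7 cor 1}). Combined with the $(1-q^m)^2$ factor, the contributions beyond leading order vanish modulo $\Phi_m(q)^5$. This shows the left side of the specialization agrees with the left side of \eqref{P7 Thm 2 eqn} modulo $\Phi_m(q)^5$. One must check that each term is $\Phi_m$-integral, which holds since $n\le (m-1)/2$.

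On the right side, the outer factor $[m+2s]$ times the ratio of $q$-Pochhammers is divisible by $\Phi_m(q)$ when $s=0$, and in general by $[m+2s]$ up to unit considerations. The inner factor $(q^{2s+2-m},q^{2s+2+m};q^2)_n$ expands as $(q^{2s+2};q^2)_n^2$ times $1-(1-q^m)^2q^{-m}\sum q^{2(s+i)}/(1-q^{2(s+i)})^2$. The numerator shift $(q,q/cd)=(q,q)$ is unchanged. This gives only the even part of $\Theta_q$. The odd part $q^{2(s+i)-1}/[2(s+i)-1]^2$ must come from the prefactor $(cq,dq;q^2)_s/(q^2/c,q^2/d;q^2)_s$, combined with telescoping against the left-side correction sums for indices $j\le s$. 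Alternatively, a cleaner Watson application with $c=q^{1+2s+m}$-type parameters may place it directly in the sum.

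The main obstacle is this bookkeeping: matching the correction sums so that exactly $\sum_{i=1}^n\Theta_q(s,i)$ appears inside the $n$-sum, while the corrections with index $\le s$ and the left-hand ones cancel. Dividing out by the prefactor requires it to be coprime to $\Phi_m$ after removing $[m+2s]$, so that congruences modulo $\Phi_m(q)^5$ survive. If direct matching fails, I would instead take $c=q^{m}$, $d=q^{-m}$ in a version where the roles of $q^{2s+1}$ are symmetric, and use the Chinese remainder theorem with the $c=d=1$ case modulo $\Phi_m(q)^3$ to fix the residual terms.
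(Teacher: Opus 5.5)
Your proposal has a genuine gap at the step that carries all the weight: the claim that the left side of \eqref{P7 Thm 1 eqn} at $(c,d)=(q^m,q^{-m})$ agrees with the left side of \eqref{P7 Thm 2 eqn} modulo $\Phi_m(q)^5$. That claim is false.

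Write $\mathcal{A}(n)$ for the $n$-th summand of the latter. Your own expansion gives the difference as $-(1-q^m)^2q^{-m}\sum_n\mathcal{A}(n)\sigma_n$ plus terms of order $(1-q^m)^4$, where $\sigma_n=\sum_{k=1}^{2n}(-1)^{k-1}q^k/(1-q^k)^2$. This is a sum with $n$-dependent weights, so Corollary~\ref{P7 cor 1} says nothing about it. Nor does that corollary make $\sum_n\mathcal{A}(n)$ divisible by $\Phi_m(q)^3$; its right side is divisible by $\Phi_m(q)$ only.

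The pairing $n\leftrightarrow\frac{m-1}{2}-n$ leaves $\sigma_n$ invariant modulo $\Phi_m(q)$. So the weighted sum is $\equiv 0\pmod{\Phi_m(q)}$, and the difference is divisible by $\Phi_m(q)^3$ but by nothing more in general. Already for $m=3$, $s=0$ the two left sides are $2[3]/(1+q)^2$ and $1+q[5]/(1+q)^6$, and
\[
\frac{2[3]}{(1+q)^2}-1-\frac{q[5]}{(1+q)^6}=\frac{\Phi_3(q)^3}{(1+q)^6}\not\equiv 0\pmod{\Phi_3(q)^4}.
\]
Your route would therefore need $\sum_n\mathcal{A}(n)\sigma_n$ modulo $\Phi_m(q)^3$, which is a new supercongruence as hard as the theorem.

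Separately, \eqref{P7 Thm 1 eqn} is not exact at $c=q^m$, $d=q^{-m}$. The Watson argument is exact only at $a=q^{\pm m}$ in Theorem~\ref{P7 theorem 3}. By the $a\leftrightarrow c$ symmetry when $d=1/c$, your specialization is exactly evaluable, but the exact right side has $(q^{2s+1+m},q^{2s+1-m};q^2)_n$ upstairs and $(q^{2s+2};q^2)_n^2$ downstairs, not the form you wrote.

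The missing idea is a second free parameter, with the gluing done before any specialization. One exact pair $q^{\pm m}$ glued with the $\Phi_m(q)$-congruence gives at most the modulus $\Phi_m(q)(1-q^m)^2$ once the parameter is set to $1$, and that is just Theorem~\ref{P7 Thm 1}. The paper instead puts $c=b$, $d=1/b$ in Theorem~\ref{P7 theorem 3}. The summand is then symmetric in $a\leftrightarrow b$, hence known exactly on $a=q^{\pm m}$ and on $b=q^{\pm m}$. The Chinese remainder theorem with the interpolants $\Omega_1,\Omega_2$ then gives Theorem~\ref{P7 theorem 4} modulo $\Phi_m(q)(1-aq^m)(a-q^m)(1-bq^m)(b-q^m)$.

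Setting $b=1$ and letting $a\to1$ gives the modulus $\Phi_m(q)(1-q^m)^4$. The poles $(a-1)^{-2}$ in $\Omega_1,\Omega_2$ cancel because $F(a)-G(a)=O((a-1)^2)$. Here $F$ carries $(aq^{2s+1},q^{2s+1}/a;q^2)_n$ in the numerator and $G$ carries $(q^{2s+2}/a,aq^{2s+2};q^2)_n$ in the denominator.

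This also resolves your bookkeeping problem. Up to the factor $[m]^2$, $\sum_i\Theta_q(s,i)$ is the second-order term of $F-G$ at $a=1$. Its odd-index part comes from $F$ and its even-index part from $G$. Your single specialization only produces $G$-type factors, which is why you found only the even part; nothing in the prefactor supplies the odd part.

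Your fallback of combining with the $c=d=1$ case modulo $\Phi_m(q)^3$ via the Chinese remainder theorem also fails. The moduli $\Phi_m(q)^3$ and $(1-q^m)^2$ are not coprime; the gluing only works while $a,b$ are still indeterminates.
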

Similarly, setting $m=p$ in \eqref{P7 Thm 2 eqn} and letting $q\to1$, we obtain the following supercongruence.
\begin{corollary}
    For any odd prime $p$, and $0 \le s \le (p-1)/2$, we have
\begin{align} \label{cor8 eqn}
\sum_{n=s}^{\frac{p-1}{2}}(4n+1) & \frac{(\frac{1}{2})_{n-s}(\frac{1}{2})_{n+s} (\frac{1}{2})_n^4}{(1)_{n-s}(1)_{n+s} (1)_n^4} \quad \equiv
(p+2s)\frac{(\frac{1}{2})_s^4 (1)_{\frac{p-1}{2}-s} (\frac{1}{2})_{\frac{p-1}{2}+s}}{(1)_s^4(\frac{1}{2})_{\frac{p-1}{2}-s} (1)_{\frac{p-1}{2}+s}}  \notag \\
&\qquad \times \sum_{n=0}^{\frac{p-1}{2}-s} \frac{(\frac{1}{2})_n^2 (s+\frac{1}{2})_n^2}{(1)_n^2(s+1)_n^2} \left\{ 1 - p^2 H_{2(s+n)}^{(2)} + p^2 H_{2s}^{(2)}  \right\} \pmod{p^5},
\end{align}
where the harmonic numbers of $k$-order are defined by
\begin{align*}
    H_{m}^{(k)}:= \sum_{n=1}^{m} \frac{1}{n^k}~~ \text{with}~~ k,m \ge 1.
\end{align*}
\end{corollary}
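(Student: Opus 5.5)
We obtain the corollary by specializing Theorem~\ref{P7 Thm 2}: put $m=p$ in \eqref{P7 Thm 2 eqn} and let $q\to 1$. This specialization is legitimate because a congruence modulo $\Phi_p(q)^5$ between rational functions of $q$, whose denominators are built only from cyclotomic factors $\Phi_j(q)$ with $p\nmid j$, descends to a congruence modulo $p^5$ at $q=1$. There are two tasks. First, we justify that the $q$-congruence passes to the limit. Second, we evaluate the limit of each factor in \eqref{P7 Thm 2 eqn}.

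\textbf{Passage to the limit.} Write the difference of the two sides of \eqref{P7 Thm 2 eqn} (with $m=p$) as $\Phi_p(q)^5A(q)/B(q)$, where $A,B\in\mathbb{Z}[q]$ and $B$ is coprime to $\Phi_p(q)$. We first check which factors can occur in denominators.
\begin{itemize}
\item The $q$-shifted factorials appearing in denominators are $(q^2;q^2)_k$ with $k\le \frac{p-1}{2}+s\le p-1$, and $(q;q^2)_{\frac{p-1}{2}-s}$. The largest factor of the latter is $1-q^{p-2s-2}$, whose exponent is less than $p$.
\item In $\Theta_q(s,i)$ we only have $[2(s+i)-1]$ and $[2(s+i)]$ with $2(s+i)\le 2(s+n)\le p-1$.
\end{itemize}
Hence $B$ is, up to units, a product of $\Phi_j(q)$ with $1<j<p$.

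Since $\Phi_j(1)$ equals either $1$ or a prime $\ell\le j<p$, the integer $B(1)$ is prime to $p$. Moreover $\Phi_p(1)=p$. Therefore, after clearing the common powers of $(1-q)$ (the ratios below are homogeneous), evaluating at $q=1$ gives a congruence modulo $p^5$ in $\mathbb{Z}_{(p)}$.

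\textbf{Evaluating the limits.} We use $(q;q^2)_k/(q^2;q^2)_k\to (\tfrac12)_k/(1)_k$ as $q\to1$; every ratio in \eqref{P7 Thm 2 eqn} pairs factorials of equal length, so the powers of $1-q$ cancel. The remaining factors behave as follows:
\begin{itemize}
\item $[4n+1]\to 4n+1$ and $[p+2s]\to p+2s$;
\item all powers of $q$ tend to $1$, including $q^n$, $q^{2n}$ and $q^{\frac{1-p}{2}+2s}$;
\item $(q^{2s+1};q^2)_n/(q^{2s+2};q^2)_n\to (s+\tfrac12)_n/(s+1)_n$;
\item $(2-q^p)[p]^2\to p^2$.
\end{itemize}
Finally,
\[
\sum_{i=1}^n\Theta_q(s,i)\to\sum_{i=1}^n\Bigl(\frac{1}{(2s+2i-1)^2}+\frac{1}{(2s+2i)^2}\Bigr)=\sum_{k=2s+1}^{2s+2n}\frac1{k^2}=H^{(2)}_{2(s+n)}-H^{(2)}_{2s}.
\]
Thus the braces become $1-p^2H^{(2)}_{2(s+n)}+p^2H^{(2)}_{2s}$, which yields \eqref{cor8 eqn}.

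\textbf{Main obstacle.} The only delicate point is the denominator check in the first step, namely that no denominator, including those in $\Theta_q$ and $(q;q^2)_{\frac{p-1}{2}-s}$, contains $\Phi_p(q)$ or evaluates at $q=1$ to a multiple of $p$. The bounds $2(s+n)\le p-1$ and $p-2s-2<p$ settle this. Without them, a factor of $p$ could be lost from the modulus.
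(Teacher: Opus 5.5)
Your proposal is correct and follows the paper's route exactly: put $m=p$ in Theorem~\ref{P7 Thm 2} and let $q\to1$. You also supply the justification, which the paper leaves implicit, for why the congruence modulo $\Phi_p(q)^5$ passes to one modulo $p^5$. One small inaccuracy: the factors $1-q^{2i}$ with $i\le \frac{p-1}{2}+s\le p-1$ contribute cyclotomic factors $\Phi_d(q)$ with $d$ up to $2p-2$, so $B$ is not a product of $\Phi_j(q)$ with $j<p$; what you actually need, and what does hold, is that no denominator $[k]$ has $p\mid k$ (so $d$ is never a power of $p$), which still gives $p\nmid B(1)$.
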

\begin{remark}
 The case $s=0$ in the congruence \eqref{cor8 eqn} extends the supercongruence \eqref{LongC2 2} of Long to the modulus $p^5$.
\end{remark}
 
It is worth noting that several interesting parametric extensions of Van Hamme's Ramanujan-type supercongruences can be found in the literature; see, for example, \cite{Guo-qB2C2, Guo25, HeWang2023_D2, Liton2025 Ramanujan, KJ 2026 RJ, P2 Liton, LiWang26, Tang2023 JMAA}.

The rest of the paper is arranged as follows. In Section \ref{Sec2}, we first define the basic hypergeometric series and then recall the Watson\rq s ${}_{8}\phi_7$ transformation to prove our main results. Section \ref{Sec 3} is devoted to the proofs of Theorems \ref{P7 Thm 1} and \ref{P7 Thm 2}. Finally, we conclude the paper by proposing two open problems in Section \ref{Sec4} for interested readers.

\section{Preliminaries} \label{Sec2}
In this section, we begin by recalling the definition of the basic hypergeometric series, whose summation and transformation formulas are frequently used in the proofs of $q$-supercongruences. Following Gasper and Rahman \cite{gasper}, the basic hypergeometric ${}_{r+1}\phi_r$ series is defined by
\begin{align*}
{}_{r+1}\phi_{r}
\left[
\begin{matrix}
a_1, a_2, \ldots, a_{r+1} \\
b_1, \ldots, b_r
\end{matrix}
; q, z
\right]
:= \sum_{n=0}^{\infty}
\frac{(a_1, a_2, \ldots, a_{r+1}; q)_n}
{(q, b_1, \ldots, b_r; q)_n}
z^n.
\end{align*}
Basic hypergeometric series satisfy a variety of summation and transformation formulas. In this paper, we will make use of the following identity in the proofs of our main theorems.

\begin{lemma}[Watson\rq s ${}_{8}\phi_7$ transformation, Appendix (III.18), \cite{gasper}] 
\begin{align} \label{Watson}
&{}_8\phi_7\left[
\begin{matrix}
a, & qa^{\frac{1}{2}}, & -qa^{\frac{1}{2}}, & b, & c, & d, & e, & q^{-N} \\
& a^{\frac{1}{2}}, & -a^{\frac{1}{2}}, & aq/b, & aq/c, & aq/d, & aq/e, & aq^{N+1}
\end{matrix}
~~; q, \dfrac{a^2q^{N+2}}{bcde}
\right] \notag \\
&\qquad =
\frac{(aq,aq/de;q)_N}{(aq/d,aq/e;q)_N}{}_4\phi_3\!\left[
\begin{matrix}
aq/bc, & d, & e, & q^{-N}\\
& aq/b, & aq/c, & deq^{-N}/a
\end{matrix}
;\,q,\;q
\right].
\end{align}
\end{lemma}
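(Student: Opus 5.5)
This lemma is a classical identity quoted from Gasper and Rahman, so the paper does not need a proof. If I were to verify it myself, I would follow the standard route: reduce it to two known summation formulas. These are Jackson's terminating very-well-poised ${}_8\phi_7$ summation and the $q$-Pfaff--Saalsch\"utz summation for a balanced terminating ${}_3\phi_2$. Both sides of \eqref{Watson} are finite sums, because $q^{-N}$ terminates the left side and $q^{-N}$ also terminates the ${}_4\phi_3$. So every interchange of summation below is a finite rearrangement and raises no convergence issue.

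\textbf{Step 1: expand the right-hand side.} I would start from the right-hand side and write its $k$-th term as
\[
\frac{(aq,aq/de;q)_N}{(aq/d,aq/e;q)_N}\cdot\frac{(aq/bc,d,e,q^{-N};q)_k}{(q,aq/b,aq/c,deq^{-N}/a;q)_k}\,q^k .
\]
The factor $(aq/bc;q)_k/(aq/b,aq/c;q)_k$ is the value of a balanced ${}_3\phi_2$ with upper parameters $q^{-k}$, and I would use the $q$-Pfaff--Saalsch\"utz formula in reverse to rewrite it as a finite sum over an auxiliary index $j\le k$. The aim is for the new sum to carry, in the index $j$, the pieces $(b,c;q)_j/(aq/b,aq/c;q)_j$ that appear in the $n=j$ term of the very-well-poised side.

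\textbf{Step 2: interchange and sum in closed form.} Next I would interchange the sums over $k$ and $j$ and shift $k=j+l$. The inner sum over $l$ should then be a terminating very-well-poised series whose base parameter is $aq^{2j}$, or equivalently a balanced series that Jackson's ${}_8\phi_7$ summation evaluates in closed form. After collecting the resulting products together with the prefactor $(aq,aq/de;q)_N/(aq/d,aq/e;q)_N$, the coefficient of the $j$-th term should equal
\[
\frac{1-aq^{2j}}{1-a}\,\frac{(a,b,c,d,e,q^{-N};q)_j}{(q,aq/b,aq/c,aq/d,aq/e,aq^{N+1};q)_j}\left(\frac{a^2q^{N+2}}{bcde}\right)^j ,
\]
which is exactly the left-hand side term by term.

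\textbf{Main obstacle.} The hard part is choosing the parameters in the Saalsch\"utz step. They must make the inner sum after interchange genuinely very-well-poised and balanced, so that Jackson's formula applies; otherwise the inner sum may be a ${}_4\phi_3$ that has no closed form. I would first try the choice in Gasper--Rahman \S2.5, where the Saalsch\"utz expansion introduces the factor $(aq/bc;q)$ against $(b,c;q)_j$. I would confirm it by checking the case $N=1$ directly.

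A fallback avoids this bookkeeping. Both sides are rational functions of $b$, so it suffices to prove the identity for $b=q^{-M}$ with $M=0,1,2,\dots$. In that case both sides are doubly terminating, and I would prove the identity by induction on $M$ using the contiguous relation in $b$. This fallback costs more computation.
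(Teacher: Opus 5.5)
The paper gives no proof of this lemma. It is quoted from Gasper--Rahman (III.18), so your opening sentence is all the paper needs. The verification route you sketch, however, would fail as written.

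\textbf{Step~1 fails.} The $q$-Pfaff--Saalsch\"utz formula cannot expand $(aq/bc;q)_k/(aq/b,aq/c;q)_k$ into terms carrying $(b,c;q)_j/(aq/b,aq/c;q)_j$. A balanced ${}_3\phi_2$ with upper parameters $q^{-k},\alpha,\beta$ has lower parameters $\gamma$ and $\alpha\beta q^{1-k}/\gamma$. Taking $\alpha,\beta=b,c$ and $\gamma=aq/b$ forces the other lower parameter to be $b^2cq^{-k}/a\neq aq/c$. The Saalsch\"utz representations that do exist, e.g.\ $(aq;q)_k^{-1}\,{}_3\phi_2(q^{-k},c,1/b;aq/b,cq^{-k}/a;q,q)$, have the wrong terms. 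None of them can produce the factor $(1-aq^{2j})(a;q)_j/(1-a)$ that appears in the $j$-th term of the ${}_8\phi_7$.

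\textbf{Step~2 fails for the same reason.} Neither the ${}_4\phi_3$ terms nor Saalsch\"utz terms contain a factor of the form $1-\lambda q^{2l}$. So the inner sum is never very-well-poised, and Jackson's formula has nothing to act on; it does not evaluate balanced, non-well-poised series. Note also that Jackson's sum is usually obtained as the special case of Watson's formula in which the ${}_4\phi_3$ collapses to a Saalsch\"utzian ${}_3\phi_2$. Invoking it here therefore runs backwards.

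\textbf{The fix: swap the roles of the two summation formulas.} The expansion you need is the terminating very-well-poised ${}_6\phi_5$ summation, in the form of the Bailey pair generated by the unit pair:
\[
\frac{(aq/bc;q)_k}{(q,aq/b,aq/c;q)_k}=\sum_{j=0}^{k}\frac{(1-aq^{2j})(a,b,c;q)_j}{(1-a)(q,aq/b,aq/c;q)_j}\cdot\frac{(-aq/bc)^j\,q^{\binom{j}{2}}}{(q;q)_{k-j}\,(aq;q)_{k+j}}.
\]
Substitute this into the right-hand side, interchange the sums, and put $k=j+l$. The inner sum becomes
\[
\frac{(d,e,q^{-N};q)_j\,q^j}{(deq^{-N}/a;q)_j\,(aq;q)_{2j}}\;{}_3\phi_2\left[\begin{matrix}q^{j-N}, & dq^j, & eq^j\\ & aq^{2j+1}, & deq^{j-N}/a\end{matrix};q,q\right].
\]
This ${}_3\phi_2$ is balanced, and Saalsch\"utz evaluates it as $(aq^{j+1}/d,aq^{j+1}/e;q)_{N-j}/(aq^{2j+1},aq/de;q)_{N-j}$. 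Now combine with the prefactor, using
\[
(aq;q)_N/(aq;q)_{N+j}=1/(aq^{N+1};q)_j
\]
and
\[
(aq/de;q)_N/\bigl((aq/de;q)_{N-j}(deq^{-N}/a;q)_j\bigr)=(-a/de)^jq^{Nj-\binom{j}{2}}.
\]
This yields exactly the $j$-th term of the ${}_8\phi_7$.

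\textbf{Your fallback does not close the gap.} The reduction to $b=q^{-M}$ never identifies a contiguous relation in $b$ that both sides satisfy. Producing one is essentially the whole difficulty.
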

In addition, we require the following two lemmas.
\begin{lemma} [Lemma $2.1$, \cite{GuoSchlosser Isr J}]  \label{P7 Lemma 1}
    Let $d$, $J$ and $m$ be positive integers with $J\le m-1$, and $x$ an integer satisfying
$dJ\equiv -x \pmod m$. Then, for $0\le n\le J$, we have
\begin{align} \label{P7 lemma 1 Eqn}
    \frac{(aq^x;q^d)_{J-n}}{(q^d/a;q^d)_{J-n}}
\equiv
(-a)^{J-2n} q^{J(dJ-d+2x)/2+(d-x)n}
\frac{(aq^x;q^d)_n}{(q^d/a;q^d)_n}
\pmod{\Phi_m(q)}.
\end{align}
\end{lemma}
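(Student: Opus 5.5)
The plan is to work throughout modulo $\Phi_m(q)$, where $q^m\equiv 1$, and to use the hypothesis $dJ\equiv -x \pmod m$ in the form $q^{dJ}\equiv q^{-x}$. Both sides are rational functions in $a$ whose denominators are products of factors $1-q^{dk}/a$, so a congruence modulo $\Phi_m(q)$ makes sense coefficientwise in $a$. The approach is to split off the last $n$ factors of each length-$J$ product and reverse them: each reversed factor should turn, modulo $\Phi_m(q)$, into a factor of the opposite product, times a monomial.

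First I split the numerator as
$$(aq^x;q^d)_J=(aq^x;q^d)_{J-n}\prod_{k=J-n}^{J-1}(1-aq^{x+dk}).$$
Putting $k=J-1-j$ with $0\le j\le n-1$ and using $q^{x+dJ}\equiv 1$, each factor becomes
$$1-aq^{-d(j+1)}=-aq^{-d(j+1)}\bigl(1-q^{d(j+1)}/a\bigr).$$
Hence the tail product is congruent to $(-a)^nq^{-dn(n+1)/2}(q^d/a;q^d)_n$.

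In the same way I split
$$(q^d/a;q^d)_J=(q^d/a;q^d)_{J-n}\prod_{k=J-n}^{J-1}(1-q^{d(k+1)}/a).$$
With $k=J-1-j$, each factor is $1-q^{dJ-dj}/a\equiv 1-q^{-x-dj}/a=-a^{-1}q^{-x-dj}(1-aq^{x+dj})$. So this tail product is congruent to $(-a)^{-n}q^{-xn-dn(n-1)/2}(aq^x;q^d)_n$.

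Dividing the two splittings and writing $R_k$ for the left-hand quotient of length $k$, I get
$$R_{J-n}\equiv R_J\cdot a^{-2n}q^{(d-x)n}\frac{(aq^x;q^d)_n}{(q^d/a;q^d)_n}.$$
The signs cancel because $(-1)^n(-1)^{-n}=1$, and the exponent is $-xn-dn(n-1)/2+dn(n+1)/2=(d-x)n$.

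It remains to evaluate $R_J$. The first tail computation with $n=J$ (the whole product reversed) gives $(aq^x;q^d)_J\equiv(-a)^Jq^{-dJ(J+1)/2}(q^d/a;q^d)_J$, so $R_J\equiv(-a)^Jq^{-dJ(J+1)/2}$. I then check that this matches the stated exponent $J(dJ-d+2x)/2$. Since $xJ\equiv -dJ^2 \pmod m$, we have $\tfrac{dJ^2-dJ}{2}+xJ\equiv -\tfrac{dJ(J+1)}{2}\pmod m$, and both sides are integers because $J(J\pm1)$ is even. Combining, $(-a)^Ja^{-2n}=(-a)^{J-2n}$, which gives exactly \eqref{P7 lemma 1 Eqn}.

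The main point needing care is this exponent bookkeeping: reducing $q$-exponents modulo $m$ is only legitimate for integer exponents, so I keep the halves attached to the even products $J(J\pm1)$ and $n(n\pm1)$. I also need the denominators $(q^d/a;q^d)_{J-n}$ and $(q^d/a;q^d)_n$ to be invertible modulo $\Phi_m(q)$, which holds for generic $a$. The remaining steps are direct.
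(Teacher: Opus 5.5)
Your proof is correct. The paper gives no proof of its own; it quotes Lemma~2.1 of Guo and Schlosser, whose argument is the same product reversal you use: reduce exponents via $q^{dJ+x}\equiv 1 \pmod{\Phi_m(q)}$, evaluate the full length-$J$ ratio, then peel off the last $n$ factors. One cosmetic difference: if you pair each denominator factor $1-q^{d(J-j)}/a\equiv 1-q^{-x-dj}/a$ with the numerator factor $1-aq^{x+dj}$, you get $R_J\equiv(-a)^Jq^{J(dJ-d+2x)/2}$ directly, so your final exponent reconciliation modulo $m$ is not needed.
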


\begin{lemma}  \label{P7 Lemma 2}
    Let $d$, $J$ and $m$ be positive integers with $J\le m-1$, and $x$ an integer satisfying
$dJ\equiv -x \pmod m$. Then, for a non-negative integer $s$ such that $s\le n\le J-s$, we have

\begin{align} \label{P7 lemma 2 Eqn 1}
    \frac{(aq^x;q^d)_{J-n-s}}{(q^d/a;q^d)_{J-n+s}}
\equiv
(-a)^{J-2n} q^{J(dJ-d+2x)/2+(d-x)n + xs + 2dns}
\frac{(aq^x;q^d)_{n-s}}{(q^d/a;q^d)_{n+s}}
\pmod{\Phi_m(q)},
\end{align}
and
\begin{align} \label{P7 lemma 2 Eqn 2}
    \frac{(aq^x;q^d)_{J-n+s}}{(q^d/a;q^d)_{J-n-s}}
\equiv
(-a)^{J-2n} q^{J(dJ-d+2x)/2+(d-x)n - xs - 2dns}
\frac{(aq^x;q^d)_{n+s}}{(q^d/a;q^d)_{n-s}}
\pmod{\Phi_m(q)}.
\end{align}
\end{lemma}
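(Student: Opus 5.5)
The plan is to derive both congruences of Lemma~\ref{P7 Lemma 2} from Lemma~\ref{P7 Lemma 1}, applied once at a shifted index. A short correction factor accounts for the mismatch between the lengths $J-n-s$ and $J-n+s$. Throughout, $a$ is an indeterminate, and every quotient is a rational function whose denominator is a product of factors $1-aq^{k}$ or $1-q^{k}/a$. No such factor is divisible by $\Phi_m(q)$, so congruences modulo $\Phi_m(q)$ may be multiplied and divided freely. The basic fact used is that $dJ\equiv -x \pmod m$ gives $q^{dJ}\equiv q^{-x}\pmod{\Phi_m(q)}$. Write $C=J(dJ-d+2x)/2$.

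For \eqref{P7 lemma 2 Eqn 1}, factor the left-hand side as
\[
\frac{(aq^x;q^d)_{J-n-s}}{(q^d/a;q^d)_{J-n+s}}
=\frac{(aq^x;q^d)_{J-(n-s)}}{(q^d/a;q^d)_{J-(n-s)}}\cdot\frac{(aq^x;q^d)_{J-n-s}}{(aq^x;q^d)_{J-n+s}}.
\]
The hypothesis $s\le n\le J-s$ gives $0\le n-s\le J$, so Lemma~\ref{P7 Lemma 1} applies to the first factor with $n$ replaced by $n-s$. It produces
\[
(-a)^{J-2n+2s}q^{C+(d-x)(n-s)}\frac{(aq^x;q^d)_{n-s}}{(q^d/a;q^d)_{n-s}}.
\]
The second factor is the reciprocal of $\prod_{k=J-n-s}^{J-n+s-1}(1-aq^{x+dk})$. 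Modulo $\Phi_m(q)$, each term satisfies $1-aq^{x+dk}\equiv 1-aq^{-d(J-k)}$. Setting $j=J-k$, which runs over $n-s+1,\dots,n+s$, we have $1-aq^{-dj}=-aq^{-dj}(1-q^{dj}/a)$. Hence the product is congruent to
\[
(-a)^{2s}q^{-d(2ns+s)}\frac{(q^d/a;q^d)_{n+s}}{(q^d/a;q^d)_{n-s}},
\]
using $\sum_{j=n-s+1}^{n+s}j=2ns+s$.

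Multiplying the two pieces, the factors $(q^d/a;q^d)_{n-s}$ cancel and the powers of $-a$ combine to $(-a)^{J-2n}$. The exponent of $q$ becomes
\[
C+(d-x)n-ds+xs+2dns+ds=C+(d-x)n+xs+2dns,
\]
which is exactly \eqref{P7 lemma 2 Eqn 1}.

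For \eqref{P7 lemma 2 Eqn 2} I proceed symmetrically. Write the left-hand side as
\[
\frac{(aq^x;q^d)_{J-(n+s)}}{(q^d/a;q^d)_{J-(n+s)}}\cdot\frac{(aq^x;q^d)_{J-n+s}}{(aq^x;q^d)_{J-n-s}}.
\]
Apply Lemma~\ref{P7 Lemma 1} at index $n+s$, which is allowed since $n+s\le J$. The second factor is the same product as before, now in the numerator, so it contributes $(-a)^{2s}q^{-d(2ns+s)}(q^d/a;q^d)_{n+s}/(q^d/a;q^d)_{n-s}$. The exponents then combine to
\[
C+(d-x)(n+s)-2dns-ds=C+(d-x)n-xs-2dns,
\]
as required.

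No step is a real obstacle. The only delicate points are checking that the shifted indices $n\mp s$ lie in the range $[0,J]$ where Lemma~\ref{P7 Lemma 1} holds, which is exactly the hypothesis $s\le n\le J-s$, and keeping the sign and $q$-power bookkeeping correct in the reindexed product.
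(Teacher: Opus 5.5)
Your argument is correct and is essentially the paper's proof. Both reduce to Lemma~\ref{P7 Lemma 1} at a shifted index and convert the $2s$ leftover linear factors using $q^{dJ+x}\equiv 1 \pmod{\Phi_m(q)}$. The only difference is a mirror-image choice: for \eqref{P7 lemma 2 Eqn 1} the paper applies Lemma~\ref{P7 Lemma 1} at index $n+s$ and peels the extra factors off $(q^d/a;q^d)_{J-n+s}$, whereas you apply it at $n-s$ and peel them off $(aq^x;q^d)_{J-n+s}$. Your choice lets the same correction product also serve for \eqref{P7 lemma 2 Eqn 2}, the case the paper only calls similar.
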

\begin{proof}
    Since $q^{dJ+x} \equiv q^m \equiv 1 \pmod{ \Phi_m(q)}$. Using Lemma \ref{P7 Lemma 1}, it is easy to find that
    \begin{align*} 
&\displaystyle \frac{(aq^x;q^d)_{J-n-s}}
{(q^d/a;q^d)_{J-n+s}} \notag \\
&=
\frac{ \frac{(aq^x;q^d)_{J-n-s}}{(q^d/a;q^d)_{J-n-s}}          }
{(1- \frac{q^{d+dJ-dn-ds}}{a})
 (1- \frac{q^{2d+dJ-dn-ds}}{a})
 \cdots
 (1- \frac{q^{dJ-dn+ds}}{a})} \notag \\
&\equiv
\frac{
(-a)^{J-2n-2s }
q^{J(dJ-d+2x)/2+(d-x)(n+s)} \frac{(aq^x;q^d)_{n+s}}{(q^d/a;q^d)_{n+s}} }
{(1- \frac{q^{d-x-dn-ds}}{a})
 (1- \frac{q^{2d-x-dn-ds}}{a})
 \cdots
 (1- \frac{q^{-x-dn+ds}}{a})} \notag \\
&\equiv
(-a)^{J-2n}
q^{J(dJ-d+2x)/2+(d-x)n + xs + 2dns}
\frac{(aq^x;q^d)_{n-s}}
{(q^d/a;q^d)_{n+s}}
\pmod{\Phi_m(q)} .
\end{align*} 
This proves \eqref{P7 lemma 2 Eqn 1}. The proof of \eqref{P7 lemma 2 Eqn 2} follows a similar approach.
\end{proof}

\section{Proof of Theorems \ref{P7 Thm 1} and \ref{P7 Thm 2}} \label{Sec 3}
Prior to proving Theorem \ref{P7 Thm 1}, we first derive the following generalized $q$-congruence.
\begin{theorem} \label{P7 theorem 3}
    Let $m>1$ be an odd integer and $s$ be a non-negative integer with $s\le \frac{m-1}{2}$. Then, modulo $\Phi_m(q)(1-aq^m)(a-q^m),$
\begin{align} \label{P7 theorem 3 eqn}
&\sum_{n=s}^{\frac{m-1}{2}}[4n+1] \frac{(q;q^2)_{n-s}(q;q^2)_{n+s} (aq,q/a,cq,dq;q^2)_n}{(q^2;q^2)_{n-s}(q^2;q^2)_{n+s} (aq^2, q^2/a, q^2/c, q^2/d; q^2)_n}(cd)^{-n}q^n \notag \\
&\quad \equiv
[m+2s]\frac{(aq,q/a,cq,dq;q^2)_s (q^2;q^2)_{\frac{m-1}{2}-s} (q;q^2)_{\frac{m-1}{2}+s}}{(aq^{2}, q^{2}/a,q^2/c,q^2/d;q^2)_s (q;q^2)_{\frac{m-1}{2}-s} (q^2;q^2)_{\frac{m-1}{2}+s}} (cd)^{-s}q^{\frac{1-m}{2}+2s} \notag \\
&\qquad \times\sum_{n=0}^{\frac{m-1}{2}-s} \frac{(q,q/cd,aq^{2s+1},q^{2s+1}/a;q^2)_n}{(q^2,q^2,q^{2s+2}/c, q^{2s+2}/d;q^2)_n} q^{2n} .
\end{align}
\end{theorem}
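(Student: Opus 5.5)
The modulus $\Phi_m(q)(1-aq^m)(a-q^m)$ is a product of three pairwise coprime polynomials in $q$ (with $a$ an extra indeterminate). I will prove the congruence modulo each factor separately and glue the three results with the Chinese remainder theorem. This is the creative microscoping scheme of Guo and Zudilin \cite{guo2019q}.

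\textbf{Step 1: the factors $1-aq^m$ and $a-q^m$.} Modulo $1-aq^m$ we may put $a=q^{-m}$, and modulo $a-q^m$ we may put $a=q^m$. The left-hand side is symmetric under $a\leftrightarrow 1/a$, and so is the right-hand side, so it suffices to treat $a=q^{-m}$.

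First reindex the left-hand side by $n\mapsto n+s$. The terms then combine into a terminating very-well-poised ${}_8\phi_7$ series with base $q^2$ and
\[
A=q^{4s+1},\quad b=q^{2s+1}/a,\quad c'=cq^{2s+1},\quad d'=dq^{2s+1},\quad e=q^{2s+1},\quad q^{-2N}=aq^{2s+1},
\]
so that $N=\frac{m-1}{2}-s$. The factor $[4n+1]$ supplies the $qA^{1/2},-qA^{1/2}$ pair. The ratios $(q;q^2)_{n+2s}/(q^2;q^2)_{n+2s}$ and $(q;q^2)_n/(q^2;q^2)_n$ produce the parameter $e$ and the $q^{-2N}$ entry, together with a prefactor of $s$-indexed products.

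One has to check that the argument $A^2q^{2N+2}/(bc'd'e)$ equals $q/(cd)$ up to the power of $q$ coming from the reindexing.

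Next apply Watson's transformation \eqref{Watson} with base $q^2$. I choose $b$ and $c'$ as the pair appearing in $aq/bc$, so that $Aq^2/(bc')$ becomes $q/(cd)$-type data. On the right this yields a ${}_4\phi_3$ whose numerator parameters are $q/cd$, $aq^{2s+1}$, $q^{2s+1}/a$ (after using $a=q^{-m}$ to rewrite $q^{-2N}$), and whose denominators are $q^{2s+2}/c$, $q^{2s+2}/d$, $q^2$, with argument $q^2$.

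The matching of parameters needs care: which two of $\{b,c',d',e\}$ go into the prefactor $(Aq^2/de)_N/(Aq^2/d,Aq^2/e)_N$ should be chosen so that this prefactor becomes $(q^{4s+3};q^2)_N\cdot\ldots$. Those products, combined with the $s$-part, should collapse to
\[
[m+2s]\,\frac{(q^2;q^2)_{\frac{m-1}{2}-s}(q;q^2)_{\frac{m-1}{2}+s}}{(q;q^2)_{\frac{m-1}{2}-s}(q^2;q^2)_{\frac{m-1}{2}+s}}\,(cd)^{-s}q^{\frac{1-m}{2}+2s}
\]
at $a=q^{-m}$. Note that $(aq,q/a;q^2)_s$ and similar factors are evaluated at $a=q^{\pm m}$ on both sides, so they agree. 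This gives an exact identity, hence the congruence modulo $(1-aq^m)(a-q^m)$.

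\textbf{Step 2: the factor $\Phi_m(q)$.} Here the right-hand side vanishes, because $[m+2s]$ contains $1-q^{m+2s}$ in the numerator over the denominator pieces. More precisely, the factor $(q;q^2)_{\frac{m-1}{2}+s}$ contains $1-q^m$ when $s\ge 1$, while the denominator $(q;q^2)_{\frac{m-1}{2}-s}(q^2;q^2)_{\frac{m-1}{2}+s}$ contains no $\Phi_m$ factor, since its exponents are below $m$ or even. For $s=0$, the factor $[m]$ itself vanishes. Either way the right side is $0$ modulo $\Phi_m(q)$.

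For the left-hand side I pair the term $n$ with the term $\frac{m-1}{2}+s-n$ (over $s\le n\le\frac{m-1}{2}$, set $J=\frac{m-1}{2}$). I apply Lemma~\ref{P7 Lemma 1} with $d=2$, $x=1$ to the factors $(aq,q/a,cq,dq;q^2)_n/(aq^2,q^2/a,q^2/c,q^2/d;q^2)_n$, and Lemma~\ref{P7 Lemma 2} to $(q;q^2)_{n\mp s}/(q^2;q^2)_{n\pm s}$. Together with $[4(J-n)+1]\equiv -q^{-4n-1}\cdots[4n+1]$, this should show that paired terms cancel modulo $\Phi_m(q)$. The self-paired middle term, if any, vanishes on its own.

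Strictly, the pairing $n\leftrightarrow J-n$ in Lemma~\ref{P7 Lemma 2} pairs $n+s$ with $J-n-s$. So I would reindex by $k=n$ over the range $[s,J-s]$, where the lemma applies. The terms with $J-s<n\le J$ vanish modulo $\Phi_m(q)$ because $(q;q^2)_{n+s}$ then contains $1-q^m$. The main obstacle is this bookkeeping of $q$-powers and signs in the pairing, in particular verifying that the exponents from the $(cd)^{-n}q^n$ weight and from the two lemmas combine to give exact antisymmetry.

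\textbf{Step 3.} Since $\Phi_m(q)$, $1-aq^m$ and $a-q^m$ are pairwise coprime, the Chinese remainder theorem combines Steps 1 and 2 into \eqref{P7 theorem 3 eqn}.
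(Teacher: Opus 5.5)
Your architecture is the paper's. You specialise $a=q^{\pm m}$, reindex $n\mapsto n+s$ to get a terminating ${}_8\phi_7$ in base $q^2$ with $A=q^{4s+1}$ and $N=\frac{m-1}{2}-s$, and apply Watson's transformation \eqref{Watson}. Modulo $\Phi_m(q)$ you pair $n\leftrightarrow M-n$ on $[s,M-s]$, with $M=\frac{m-1}{2}$, via Lemmas~\ref{P7 Lemma 1} and~\ref{P7 Lemma 2}. You discard $M-s<n\le M$ because of $(q;q^2)_{n+s}$, kill the right-hand side by \eqref{relation}, and glue by the Chinese remainder theorem. Steps~2 and~3 are fine, and the bookkeeping you flagged does close. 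Let $T(n)$ be the $n$-th summand. The four uses of Lemma~\ref{P7 Lemma 1} contribute $(cd)^{M-2n}q^{4M^2+4n}$, and the two uses of Lemma~\ref{P7 Lemma 2} contribute $q^{2M^2+2n}$. Together with the weight and $[4(M-n)+1]\equiv-q^{-4n-1}[4n+1]$, this gives $T(M-n)\equiv -q^{6M^2+M-1}T(n)$. Since $6M^2+M-1=(2M+1)(3M-1)=m(3M-1)$, the extra power is $\equiv 1$ modulo $\Phi_m(q)$.

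The real problem is the concrete parameter matching in Step~1, which is the whole content of that step. As written it is wrong, and carrying it out literally would not produce \eqref{P7 theorem 3 eqn}.

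(i) The parameter coming from $(q;q^2)_n/(q^2;q^2)_n$ is $q$, not $q^{2s+1}$. Its well-poised partner $Aq^2/q=q^{4s+2}$ comes from $(q^2;q^2)_{n+2s}=(q^2;q^2)_{2s}(q^{4s+2};q^2)_n$, while $(q;q^2)_{n+2s}$ supplies $A$ itself. The partner $q^{2s+2}$ of your $q^{2s+1}$ does not occur in the denominator.

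(ii) The pair entering Watson's $aq/bc$ must be $cq^{2s+1},dq^{2s+1}$, since $q^{4s+3}/(cd\,q^{4s+2})=q/(cd)$. Your pair $q^{2s+1}/a,\ cq^{2s+1}$ gives $aq/c$ instead, so the resulting ${}_4\phi_3$ is not the one in the theorem.

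(iii) Consequently Watson's $d,e$ are $q$ and $q^{2s+1}/a=q^{2s+1+m}$. The prefactor is then $(q^{4s+3},q^{2s+1-m};q^2)_N/(q^{4s+2},q^{2s+2-m};q^2)_N$. The third lower parameter is $q\cdot q^{2s+1+m}\cdot q^{2s+1-m}/q^{4s+1}=q^2$. The ${}_4\phi_3$ has four upper parameters $q/cd,\,q,\,q^{2s+1+m},\,q^{2s+1-m}$; you dropped $q$.

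(iv) In base $q^2$ the argument is $A^2q^{2N+4}/(bc'd'e)$, not $A^2q^{2N+2}/(bc'd'e)$. With $e=q$ it equals $q/(cd)$ exactly, and the reindexing only produces the constant $(cd)^{-s}q^s$.

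With this matching, three identities give exactly the collapse you anticipated:
\[
\frac{(q^{2s+1-m};q^2)_N}{(q^{2s+2-m};q^2)_N}=q^{\frac{1-m}{2}+s}\,\frac{(q^2;q^2)_N}{(q;q^2)_N},
\]
\[
[4s+1]\,(q;q^2)_{2s}\,(q^{4s+3};q^2)_N=[m+2s]\,(q;q^2)_{\frac{m-1}{2}+s},
\]
\[
(q^2;q^2)_{2s}\,(q^{4s+2};q^2)_N=(q^2;q^2)_{\frac{m-1}{2}+s}.
\]
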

\begin{proof}
    For $a=q^m$ or $a=q^{-m},$ the left-hand side of \eqref{P7 theorem 3 eqn} is equal to
    \begin{align} \label{P7 theorem 3 eqn 2}
        & \sum_{n=s}^{\frac{m-1}{2}} [4n+1]
\frac{(q;q^2)_{n-s}(q;q^2)_{n+s} (q^{1+m},q^{1-m},cq,dq;q^2)_n}
{(q^2;q^2)_{n-s}(q^2;q^2)_{n+s} (q^{2+m}, q^{2-m}, q^2/c, q^2/d; q^2)_n} (cd)^{-n}q^n \notag \\
& = \sum_{n=0}^{\frac{m-1}{2}-s} [4n+4s+1]
\frac{(q;q^2)_{n}(q;q^2)_{n+2s} (q^{1+m},q^{1-m},cq,dq;q^2)_{n+s}}
{(q^2;q^2)_{n}(q^2;q^2)_{n+2s} (q^{2+m}, q^{2-m}, q^2/c, q^2/d; q^2)_{n+s}}
(cd)^{-n-s}q^{n+s} \notag \\
& = [4s+1](cd)^{-s}q^s \frac{(q;q^2)_{2s} (q^{1+m},q^{1-m},cq,dq;q^2)_s}{(q^2;q^2)_{2s} (q^{2+m}, q^{2-m},q^2/c,q^2/d;q^2)_s}
\notag\\
&\quad\times
{}_8\phi_7\left[
\begin{matrix}
q^{4s+1}, & q^{2s+\frac{5}{2}}, & -q^{2s+\frac{5}{2}},
& cq^{2s+1}, & dq^{2s+1}, & q, & q^{2s+1+m}, & q^{2s+1-m} \\
q^{2s+\frac{1}{2}}, & -q^{2s+\frac{1}{2}},
& q^{2s+2}/c, & q^{2s+2}/d, & q^{4s+2}, & q^{2s+2-m}, & q^{2s+2+m}
\end{matrix}
; q^2, \frac{q}{cd}
\right].
    \end{align}
Making use of the parameter substitutions $q \mapsto q^2$, $a=q^{4s+1}$, $b=cq^{2s+1}$, $c=dq^{2s+1}$, $d=q$, $e=q^{2s+1+m}$ and $N \mapsto \frac{m-1}{2}-s$ in Watson\rq s ${}_8\phi_7$ transformation formula \eqref{Watson}, the right-hand side of \eqref{P7 theorem 3 eqn 2} can be rewritten as
\begin{align} \label{P7 theorem 3 eqn 3}
  &  [4s+1](cd)^{-s}q^s \frac{(q;q^2)_{2s} (q^{1+m},q^{1-m},cq,dq;q^2)_s}{(q^2;q^2)_{2s} (q^{2+m}, q^{2-m},q^2/c,q^2/d;q^2)_s} \notag \\
&\quad \times \frac{(q^{4s+3},q^{2s+1-m};q^2)_{\frac{m-1}{2}-s}}{(q^{4s+2},q^{2s+2-m};q^2)_{\frac{m-1}{2}-s}} \sum_{n=0}^{\frac{m-1}{2}-s} \frac{(q/cd,q,q^{2s+1+m},q^{2s+1-m};q^2)_n}{(q^2,q^2,q^{2s+2}/c, q^{2s+2}/d;q^2)_n} q^{2n} \notag \\
& = [4s+1](cd)^{-s}q^s \frac{(q;q^2)_{2s} (q^{1+m},q^{1-m},cq,dq;q^2)_s}{(q^2;q^2)_{2s} (q^{2+m}, q^{2-m},q^2/c,q^2/d;q^2)_s} \frac{(q^2,q^{4s+3};q^2)_{\frac{m-1}{2}-s}}{(q,q^{4s+2};q^2)_{\frac{m-1}{2}-s}} q^{\frac{1-m}{2}+s} \notag \\
&\quad \times \sum_{n=0}^{\frac{m-1}{2}-s} \frac{(q,q/cd,q^{2s+1+m},q^{2s+1-m};q^2)_n}{(q^2,q^2,q^{2s+2}/c, q^{2s+2}/d;q^2)_n} q^{2n} \notag \\
& = [m+2s] \frac{(q^{1+m},q^{1-m},cq,dq;q^2)_s (q^2;q^2)_{\frac{m-1}{2}-s} (q;q^2)_{\frac{m-1}{2}+s}}{(q^{2+m}, q^{2-m},q^2/c,q^2/d;q^2)_s (q;q^2)_{\frac{m-1}{2}-s} (q^2;q^2)_{\frac{m-1}{2}+s}} (cd)^{-s}q^{\frac{1-m}{2}+2s} \notag \\
&\quad \times \sum_{n=0}^{\frac{m-1}{2}-s} \frac{(q,q/cd,q^{2s+1+m},q^{2s+1-m};q^2)_n}{(q^2,q^2,q^{2s+2}/c, q^{2s+2}/d;q^2)_n} q^{2n}.
\end{align}
Therefore, the $q$-supercongruence \eqref{P7 theorem 3 eqn} is true modulo $a - q^{m}$ and $1-aq^m$. By setting $d=2, x=1$ and $J=\frac{m-1}{2},$ and utilising $q$-congruences \eqref{P7 lemma 1 Eqn}, \eqref{P7 lemma 2 Eqn 1} and \eqref{P7 lemma 2 Eqn 2} displayed in Lemmas \ref{P7 Lemma 1} and \ref{P7 Lemma 2}, it is routine to check that, for $s \le n \le M-s,$
\begin{align*}
    & [4(M-n)+1]
\frac{(q;q^2)_{M-n-s}(q;q^2)_{M-n+s} (aq,q/a,cq,dq;q^2)_{M-n}}
{(q^2;q^2)_{M-n-s}(q^2;q^2)_{M-n+s} (aq^2, q^2/a, q^2/c, q^2/d; q^2)_{M-n}}
(cd)^{n-M}q^{M-n} \\
&\equiv - [4n+1]
\frac{(q;q^2)_{n-s}(q;q^2)_{n+s} (aq,q/a,cq,dq;q^2)_n}
{(q^2;q^2)_{n-s}(q^2;q^2)_{n+s} (aq^2, q^2/a, q^2/c, q^2/d; q^2)_n}
(cd)^{-n}q^n \pmod{\Phi_m(q)}.
\end{align*}
This means that the partial sum $\sum_{n=s}^{\frac{m-1}{2}-s} \mathcal{A}_q(n)$ is congruent to $0$ modulo $\Phi_m(q),$ where $\mathcal{A}_q(n)$ denote the $n$-th term on the left-hand side of \eqref{P7 theorem 3 eqn}, that is,
\begin{align*}
    \mathcal{A}_{q}(n)= [4n+1]
\frac{(q;q^2)_{n-s}(q;q^2)_{n+s} (aq,q/a,cq,dq;q^2)_n}
{(q^2;q^2)_{n-s}(q^2;q^2)_{n+s} (aq^2, q^2/a, q^2/c, q^2/d; q^2)_n}
(cd)^{-n}q^n.
\end{align*}
Moreover, for $n$ lying in the interval $  \left( \frac{m-1}{2}-s,  \frac{m-1}{2}\right]$, one can see that $\mathcal{A}_q(n)$ is congruent to $0$ modulo $\Phi_m(q)$ because of the $q$-shifted factorial $(q;q^2)_{n+s}$ present in the numerator of $\mathcal{A}_q(n)$, while the denominator of $\mathcal{A}_q(n)$ is relatively prime to $\Phi_m(q)$ since $m$ is odd. Note that
\begin{align} \label{relation}
    [m+2s] (q;q^2)_{\frac{m-1}{2}+s} = [m](q;q^2)_{\frac{m-1}{2}} (q^{m+2};q^2)_s \equiv 0 \pmod{\Phi_m(q)},
\end{align}
for $m>1$. The relation \eqref{relation} ensures that the summand on the right-hand side of \eqref{P7 theorem 3 eqn} is congruent to $0$ modulo $\Phi_m(q)$ because its denominator is relatively prime to $\Phi_m(q)$. This implies that the $q$-congruence \eqref{P7 theorem 3 eqn} is true modulo $\Phi_m(q)$. Since the polynomials $\Phi_m(q)$, $(1-aq^m)$, and $(a-q^m)$ are pairwise coprime. Thus, by the Chinese remainder theorem for coprime polynomials, the $q$-congruence \eqref{P7 theorem 3 eqn} holds modulo $\Phi_m(q)(1-aq^m)(a-q^m)$. This completes the proof.
\end{proof}

\begin{proof} [Proof of Theorem \ref{P7 Thm 1}]
     Letting $a=1$ in the $q$-congruence \eqref{P7 theorem 3 eqn}, we obtain \eqref{P7 Thm 1 eqn} is true modulo $\Phi_m(q)^3.$ This immediately complete the proof of Theorem \ref{P7 Thm 1}.
\end{proof}
We are now ready to prove Theorem \ref{P7 Thm 2}, for which we first establish the following result.
\begin{theorem} \label{P7 theorem 4}
    Let $m>1$ be an odd integer and $s$ be a non-negative integer with $s\le \frac{m-1}{2}$. Then, modulo $\Phi_m(q)(1-aq^m)(a-q^m)(1-bq^m)(b-q^m),$
\begin{align} \label{P7 theorem 4 eqn}
&\sum_{n=s}^{\frac{m-1}{2}}[4n+1] \frac{(q;q^2)_{n-s}(q;q^2)_{n+s} (aq,q/a,bq,q/b;q^2)_n}{(q^2;q^2)_{n-s}(q^2;q^2)_{n+s} (aq^2, q^2/a, q^2/b, bq^2; q^2)_n}q^n \notag \\
& \equiv [m+2s] \frac{(aq,q/a,bq,q/b;q^2)_s (q^2;q^2)_{\frac{m-1}{2}-s} (q;q^2)_{\frac{m-1}{2}+s}}{(aq^{2}, q^{2}/a,q^2/b,bq^2;q^2)_s (q;q^2)_{\frac{m-1}{2}-s} (q^2;q^2)_{\frac{m-1}{2}+s}} q^{\frac{1-m}{2}+2s} \notag \\
    & \quad \times \left\{ \Omega_1(a,b,q,m) \sum_{n=0}^{\frac{m-1}{2}-s} \frac{(q;q^2)_n^2(aq^{2s+1},q^{2s+1}/a;q^2)_n}{(q^2;q^2)_n^2(q^{2s+2}/b, bq^{2s+2};q^2)_n} q^{2n}  \right. \notag \\ &\left.  \qquad + ~\Omega_2(a,b,q,m) \sum_{n=0}^{\frac{m-1}{2}-s} \frac{(q;q^2)_n^2(bq^{2s+1},q^{2s+1}/b;q^2)_n}{(q^2;q^2)_n^2(q^{2s+2}/a, aq^{2s+2};q^2)_n} q^{2n} \right\},
\end{align}
where \begin{align*}
    & \Omega_1(a,b,q,m) = \frac{(1-bq^m)(b-q^m)(-1-a^2+aq^m)}{(a-b)(1-ab)}, \\
    & \Omega_2(a,b,q,m) = \frac{(1-aq^m)(a-q^m)(-1-b^2+bq^m)}{(b-a)(1-ab)}. 
\end{align*}
\end{theorem}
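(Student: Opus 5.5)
The plan is creative microscoping with two free parameters, combining Theorem \ref{P7 theorem 3} with the Chinese remainder theorem. The modulus $\Phi_m(q)(1-aq^m)(a-q^m)(1-bq^m)(b-q^m)$ is a product of pairwise coprime factors. So it suffices to verify \eqref{P7 theorem 4 eqn} modulo $\Phi_m(q)$, modulo $(1-aq^m)(a-q^m)$, and modulo $(1-bq^m)(b-q^m)$ separately.

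\textbf{Step 1 (modulo $(1-aq^m)(a-q^m)$).} Specialize Theorem \ref{P7 theorem 3} by taking $c=b$ and $d=1/b$, so that $cd=1$ and $q/cd=q$. The left-hand side of \eqref{P7 theorem 3 eqn} then becomes exactly the left-hand side of \eqref{P7 theorem 4 eqn}. The proof of Theorem \ref{P7 theorem 3} shows that for $a=q^{\pm m}$ the two sides are equal exactly, not merely congruent; this came from Watson's transformation \eqref{Watson}. Hence the left-hand side of \eqref{P7 theorem 4 eqn} is congruent modulo $(1-aq^m)(a-q^m)$ to
\[
[m+2s]\,(\text{prefactor})\sum_{n=0}^{\frac{m-1}{2}-s}\frac{(q;q^2)_n^2(aq^{2s+1},q^{2s+1}/a;q^2)_n}{(q^2;q^2)_n^2(q^{2s+2}/b,bq^{2s+2};q^2)_n}q^{2n}.
\]
On the right-hand side of \eqref{P7 theorem 4 eqn}, $\Omega_2$ contains the factor $(1-aq^m)(a-q^m)$, so it vanishes modulo this product. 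It remains to check that $\Omega_1\equiv 1$ modulo $(1-aq^m)(a-q^m)$. Evaluate at $a=q^m$: then $-1-a^2+aq^m=-1$, and
\[
(1-bq^m)(b-q^m)=-(q^m-b)(1-bq^m)=-(a-b)(1-ab),
\]
so $\Omega_1=1$. The case $a=q^{-m}$ is symmetric, since $\Omega_1$ is invariant under $a\mapsto 1/a$ up to the same computation; here $-1-a^{-2}+a^{-1}q^m$ must be matched appropriately.

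\textbf{Step 2 (modulo $(1-bq^m)(b-q^m)$).} This is symmetric: swap the roles of $a$ and $b$ by taking $a\mapsto b$, $c=a$, $d=1/a$ in Theorem \ref{P7 theorem 3}. The left-hand side of \eqref{P7 theorem 4 eqn} is symmetric in $a,b$, and the prefactor is symmetric as well, so the same computation gives $\Omega_2\equiv 1$ and $\Omega_1\equiv 0$.

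\textbf{Step 3 (modulo $\Phi_m(q)$).} Both sides vanish modulo $\Phi_m(q)$. On the left, use the pairing $n\leftrightarrow \frac{m-1}{2}-n$ via Lemmas \ref{P7 Lemma 1} and \ref{P7 Lemma 2}, exactly as in the proof of Theorem \ref{P7 theorem 3}, together with the vanishing of the terms with $n>\frac{m-1}{2}-s$. On the right, the factor $[m+2s](q;q^2)_{\frac{m-1}{2}+s}\equiv 0$ by \eqref{relation}. One must also note that the denominators $(a-b)(1-ab)$ and the $q$-factorials are coprime to $\Phi_m(q)$ as polynomials in $q$, viewing $a,b$ as indeterminates.

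\textbf{Main obstacle.} The main care point is the rational functions $\Omega_i$, whose denominators $(a-b)(1-ab)$ are not obviously invertible modulo the factors. This is resolved by working in the ring of rational functions in $a,b$ whose denominators are coprime to each modulus. A second point is to confirm that the value $\Omega_1\equiv 1$ holds at both roots $a=q^m$ and $a=q^{-m}$. At $a=q^{-m}$ one gets $-1-a^2+aq^m=-a^2$ and $(1-bq^m)(b-q^m)=-a^{-2}(1-ab)(a-b)\cdot(\text{sign})$, which should again give $1$. I would verify this explicitly.
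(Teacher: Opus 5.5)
Your proposal is correct and takes essentially the same route as the paper. You specialize the exact Watson-based identity from the proof of Theorem~\ref{P7 theorem 3} at $c=b$, $d=1/b$ to get the congruence modulo $(1-aq^m)(a-q^m)$, swap $a\leftrightarrow b$ for the one modulo $(1-bq^m)(b-q^m)$, note that both sides vanish modulo $\Phi_m(q)$, and combine by the Chinese remainder theorem using $\Omega_1\equiv 1$, $\Omega_2\equiv 0$ modulo $(1-aq^m)(a-q^m)$ and vice versa. The check at $a=q^{-m}$ that you left pending does go through: there $-1-a^2+aq^m=-a^2$ and $(1-bq^m)(b-q^m)=-a^{-2}(a-b)(1-ab)$, so $\Omega_1=1$, which also follows from $\Omega_1$ being exactly invariant under $a\mapsto 1/a$.
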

\begin{proof}
    Substituting $c=b$ and $d=\frac{1}{b}$ in $q$-congruence \eqref{P7 theorem 3 eqn 3}, we obtain
    \begin{align*}
        &\sum_{n=s}^{\frac{m-1}{2}}[4n+1] \frac{(q;q^2)_{n-s}(q;q^2)_{n+s} (q^{1+m},q^{1-m},bq,q/b;q^2)_n}{(q^2;q^2)_{n-s}(q^2;q^2)_{n+s} (q^{2+m}, q^{2-m}, q^2/b, bq^2; q^2)_n}q^n  \\
        & = [m+2s] \frac{(q^{1+m},q^{1-m},bq,q/b;q^2)_s (q^2;q^2)_{\frac{m-1}{2}-s} (q;q^2)_{\frac{m-1}{2}+s}}{(q^{2+m}, q^{2-m},q^2/b,bq^2;q^2)_s (q;q^2)_{\frac{m-1}{2}-s} (q^2;q^2)_{\frac{m-1}{2}+s}} q^{\frac{1-m}{2}+2s} \\
&\quad \times \sum_{n=0}^{\frac{m-1}{2}-s} \frac{(q;q^2)_n^2(q^{2s+1+m},q^{2s+1-m};q^2)_n}{(q^2;q^2)_n^2(q^{2s+2}/b, bq^{2s+2};q^2)_n} q^{2n}.
    \end{align*}
Thus, under modulo $(1-aq^m)(a-q^m)$, we obtain
\begin{align} \label{P7 theorem 4 eqn 2}
    &\sum_{n=s}^{\frac{m-1}{2}}[4n+1] \frac{(q;q^2)_{n-s}(q;q^2)_{n+s} (aq,q/a,bq,q/b;q^2)_n}{(q^2;q^2)_{n-s}(q^2;q^2)_{n+s} (aq^{2}, q^{2}/a, q^2/b, bq^2; q^2)_n}q^n \notag \\
        & \equiv [m+2s] \frac{(aq,q/a,bq,q/b;q^2)_s (q^2;q^2)_{\frac{m-1}{2}-s} (q;q^2)_{\frac{m-1}{2}+s}}{(aq^{2}, q^{2}/a,q^2/b,bq^2;q^2)_s (q;q^2)_{\frac{m-1}{2}-s} (q^2;q^2)_{\frac{m-1}{2}+s}} q^{\frac{1-m}{2}+2s} \notag \\
&\quad \times \sum_{n=0}^{\frac{m-1}{2}-s} \frac{(q;q^2)_n^2(aq^{2s+1},q^{2s+1}/a;q^2)_n}{(q^2;q^2)_n^2(q^{2s+2}/b, bq^{2s+2};q^2)_n} q^{2n} \pmod{(1-aq^m)(a-q^m)}.
\end{align}
Replacing $a$ with $b$ in the above congruence \eqref{P7 theorem 4 eqn 2}, we obtain
\begin{align*} \label{P7 theorem 4 eqn 3}
    &\sum_{n=s}^{\frac{m-1}{2}}[4n+1] \frac{(q;q^2)_{n-s}(q;q^2)_{n+s} (aq,q/a,bq,q/b;q^2)_n}{(q^2;q^2)_{n-s}(q^2;q^2)_{n+s} (aq^{2}, q^{2}/a, q^2/b, bq^2; q^2)_n}q^n \notag \\
    & \equiv [m+2s] \frac{(aq,q/a,bq,q/b;q^2)_s (q^2;q^2)_{\frac{m-1}{2}-s} (q;q^2)_{\frac{m-1}{2}+s}}{(aq^{2}, q^{2}/a,q^2/b,bq^2;q^2)_s (q;q^2)_{\frac{m-1}{2}-s} (q^2;q^2)_{\frac{m-1}{2}+s}} q^{\frac{1-m}{2}+2s} \notag \\
&\quad \times \sum_{n=0}^{\frac{m-1}{2}-s} \frac{(q;q^2)_n^2(bq^{2s+1},q^{2s+1}/b;q^2)_n}{(q^2;q^2)_n^2(q^{2s+2}/a, aq^{2s+2};q^2)_n} q^{2n} \pmod{(1-bq^m)(b-q^m)}.
\end{align*}
Following the same approach used in the proof of Theorem \ref{P7 theorem 3}, it is easy to check that $\sum_{n=s}^{\frac{m-1}{2}}\mathcal{B}_q(n)$ is congruent to $0$, where $\mathcal{B}_q(n)$ denotes the $n$-th summand on the left-hand side of \eqref{P7 theorem 4 eqn}. Further, the summand on the right-hand side of \eqref{P7 theorem 4 eqn} is congruent to $0$ due to the relation \eqref{relation}. Since the polynomials $\Phi_m(q),$ $(1-aq^m)(a-q^m)$, and $(1-bq^m)(b-q^m)$ are pairwise coprime. Therefore, by applying the Chinese remainder theorem for coprime polynomials, we can easily evaluate the remainder of the left-hand side of \eqref{P7 theorem 4 eqn} modulo $\Phi_m(q)(1-aq^m)(a-q^m)(1-bq^m)(b-q^m)$. We can employ the following relations:
\begin{align*}
    & \Omega_1(a,b,q,m) \equiv 1 \pmod{(1-aq^m)(a-q^m)}, \\
    & \Omega_2(a,b,q,m) \equiv 1 \pmod{(1-bq^m)(b-q^m)},
\end{align*}
which allows us to conclude that modulo $\Phi_m(q)(1-aq^m)(a-q^m)(1-bq^m)(b-q^m)$, we have
\begin{align*}
    &\sum_{n=s}^{\frac{m-1}{2}}[4n+1] \frac{(q;q^2)_{n-s}(q;q^2)_{n+s} (aq,q/a,bq,q/b;q^2)_n}{(q^2;q^2)_{n-s}(q^2;q^2)_{n+s} (aq^2, q^2/a, q^2/b, bq^2; q^2)_n}q^n \\
    & \equiv [m+2s] \frac{(aq,q/a,bq,q/b;q^2)_s (q^2;q^2)_{\frac{m-1}{2}-s} (q;q^2)_{\frac{m-1}{2}+s}}{(aq^{2}, q^{2}/a,q^2/b,bq^2;q^2)_s (q;q^2)_{\frac{m-1}{2}-s} (q^2;q^2)_{\frac{m-1}{2}+s}} q^{\frac{1-m}{2}+2s} \\
    & \quad \times \left\{ \Omega_1(a,b,q,m) \sum_{n=0}^{\frac{m-1}{2}-s} \frac{(q;q^2)_n^2(aq^{2s+1},q^{2s+1}/a;q^2)_n}{(q^2;q^2)_n^2(q^{2s+2}/b, bq^{2s+2};q^2)_n} q^{2n}  \right. \\ &\left.  \qquad + ~ \Omega_2(a,b,q,m) \sum_{n=0}^{\frac{m-1}{2}-s} \frac{(q;q^2)_n^2(bq^{2s+1},q^{2s+1}/b;q^2)_n}{(q^2;q^2)_n^2(q^{2s+2}/a, aq^{2s+2};q^2)_n} q^{2n} \right\}.
\end{align*}
This completes the proof.
\end{proof}

\begin{proof} [Proof of Theorem \ref{P7 Thm 2}]
    Setting $b=1$ in $q$-congruence \eqref{P7 theorem 4 eqn}, we obtain
    \begin{align*} 
&\sum_{n=s}^{\frac{m-1}{2}}[4n+1] \frac{(q;q^2)_{n-s}(q;q^2)_{n+s} (aq,q/a;q^2)_n (q;q^2)_n^2}{(q^2;q^2)_{n-s}(q^2;q^2)_{n+s} (aq^2, q^2/a; q^2)_n (q^2;q^2)_n^2}q^n \notag \\
& \equiv [m+2s] \frac{(aq,q/a;q^2)_s (q;q^2)_s^2(q^2;q^2)_{\frac{m-1}{2}-s} (q;q^2)_{\frac{m-1}{2}+s}}{(aq^{2}, q^{2}/a;q^2)_s (q^2;q^2)_s^2(q;q^2)_{\frac{m-1}{2}-s} (q^2;q^2)_{\frac{m-1}{2}+s}} q^{\frac{1-m}{2}+2s} \notag \\
    & \quad \times \left\{ (1-q^m)^2 \sum_{n=0}^{\frac{m-1}{2}-s} \frac{(q;q^2)_n^2(aq^{2s+1},q^{2s+1}/a;q^2)_n}{(q^2;q^2)_n^2(q^{2s+2}, q^{2s+2};q^2)_n} q^{2n} \right. \notag \\ &\left.  \qquad \qquad + \frac{a(1-q^m)^2(2-q^m)}{(a-1)^2} \sum_{n=0}^{\frac{m-1}{2}-s} \frac{(q;q^2)_n^2(aq^{2s+1},q^{2s+1}/a;q^2)_n}{(q^2;q^2)_n^2(q^{2s+2}, q^{2s+2};q^2)_n} q^{2n}  \right. \notag \\ &\left.  \qquad \qquad - \frac{(1-aq^m)(a-q^m)(2-q^m)}{(a-1)^2} \sum_{n=0}^{\frac{m-1}{2}-s} \frac{(q;q^2)_n^2(q^{2s+1},q^{2s+1};q^2)_n}{(q^2;q^2)_n^2(q^{2s+2}/a, aq^{2s+2};q^2)_n} q^{2n} \right\},
\end{align*}
where we have used that $(a-1)^2 + a(2-q^m) = 1 + a^2 -aq^m.$ Applying L\rq H\^ospital\rq s rule, it is easy to see that
\begin{align*}
    & \lim_{a \to 1} \left\{\frac{a(1-q^m)^2}{(a-1)^2} \sum_{n=0}^{\frac{m-1}{2}-s} \frac{(q;q^2)_n^2(aq^{2s+1},q^{2s+1}/a;q^2)_n}{(q^2;q^2)_n^2(q^{2s+2}, q^{2s+2};q^2)_n} q^{2n} \right. \notag \\ &\left.  \qquad \qquad - \frac{(1-aq^m)(a-q^m)}{(a-1)^2} \sum_{n=0}^{\frac{m-1}{2}-s} \frac{(q;q^2)_n^2(q^{2s+1},q^{2s+1};q^2)_n}{(q^2;q^2)_n^2(q^{2s+2}/a, aq^{2s+2};q^2)_n} q^{2n} \right\} \\
    & = \sum_{n=0}^{\frac{m-1}{2}-s} \frac{(q;q^2)_n^2 (q^{2s+1};q^2)_n^2}{(q^2;q^2)_n^2(q^{2s+2};q^2)_n^2} q^{2n} \left\{ q^m - [m]^2 \sum_{i=1}^{n} \left( \frac{q^{2(s+i)-1}}{[2(s+i)-1]^2} + \frac{q^{2(s+i)}}{[2(s+i)]^2}\right)\right\}.
\end{align*}
    Since $(1-q^m)^2 + (2-q^m)q^m=1,$ the proof of Theorem \ref{P7 Thm 2} then follows by setting $a=1$.
\end{proof}

\section{Open problem} \label{Sec4}
We believe that the stronger version of Theorems \ref{P7 Thm 1} and \ref{P7 Thm 2}  should also be true. Based on the numerical evaluation, we propose the following conjectures as an open problem.
\begin{conjecture}
    The supercongruence \eqref{P7 Thm 1 eqn} is true modulo $[m]\Phi_m(q)^2$.
\end{conjecture}
\begin{conjecture}
    The supercongruence \eqref{P7 Thm 2 eqn} is true modulo $[m]\Phi_m(q)^4$.
\end{conjecture}
It is worth mentioning that the creative microscoping method used to investigate $q$-congruences does not seem to be applicable here due to the presence of the parameter $s$. It would be interesting to see if the reader can come to a conclusion regarding one of these conjectures.

\section*{Declarations}
	
{\bf Author contributions:} All authors have contributed equally to the preparation of this manuscript.\\
{\bf Funding:} No funding was received for this research.\\
{\bf Conflicts of Interest:} The authors do not have any conflicts
of interest. \\
{\bf Data Availability Statement:}
	No datasets were generated or analyzed in this study. The results are obtained entirely through theoretical and mathematical methods.

\end{document}